\begin{document}
\newenvironment {proof}{{\noindent\bf Proof.}}{\hfill $\Box$ \medskip}

\newtheorem{theorem}{Theorem}[section]
\newtheorem{lemma}[theorem]{Lemma}
\newtheorem{condition}[theorem]{Condition}
\newtheorem{proposition}[theorem]{Proposition}
\newtheorem{remark}[theorem]{Remark}
\newtheorem{definition}[theorem]{Definition}
\newtheorem{hypothesis}[theorem]{Hypothesis}
\newtheorem{corollary}[theorem]{Corollary}
\newtheorem{example}[theorem]{Example}
\newtheorem{descript}[theorem]{Description}
\newtheorem{assumption}[theorem]{Assumption}

\newcommand{\ba}{\begin{align}}
\newcommand{\ea}{\end{align}}

\def\P{\mathbb{P}}
\def\R{\mathbb{R}}
\def\E{\mathbb{E}}
\def\N{\mathbb{N}}
\def\Z{\mathbb{Z}}

\renewcommand {\theequation}{\arabic{section}.\arabic{equation}}
\def \non{{\nonumber}}
\def \hat{\widehat}
\def \tilde{\widetilde}
\def \bar{\overline}

\def\ind{{\mathchoice {\rm 1\mskip-4mu l} {\rm 1\mskip-4mu l}
{\rm 1\mskip-4.5mu l} {\rm 1\mskip-5mu l}}}

\title{\Large\ {\bf Sensitivity analysis for multiscale stochastic reaction networks using hybrid approximations}}

\author{Ankit Gupta and Mustafa Khammash\\
}
\date{\today}
\maketitle
\begin{abstract}
We consider the problem of estimating parameter sensitivities for stochastic models of multiscale reaction networks. These sensitivity values are important for model analysis, and, the methods that currently exist for sensitivity estimation mostly rely on simulations of the stochastic dynamics. This is problematic because these simulations become computationally infeasible for multiscale networks due to reactions firing at several different timescales. However it is often possible to exploit the multiscale property to derive a ``model reduction" and approximate the dynamics as a \emph{Piecewise Deterministic Markov process} (PDMP), which is a hybrid process consisting of both discrete and continuous components. The aim of this paper is to show that such PDMP approximations can be used to accurately and efficiently estimate the parameter sensitivity for the original multiscale stochastic model. We prove the convergence of the original sensitivity to the corresponding PDMP sensitivity, in the limit where the PDMP approximation becomes exact. Moreover we establish a representation of the PDMP parameter sensitivity that separates the contributions of discrete and continuous components in the dynamics, and allows one to efficiently estimate both contributions. 
\end{abstract}

\noindent Keywords: parameter sensitivity; stochastic reaction networks; piecewise deterministic Markov processes; multiscale networks; reduced models; random time change representation; coupling. \\ 

\noindent Mathematical Subject Classification (2010): 60J10; 60J22; 60J27; 60H35; 65C05

\medskip

\setcounter{equation}{0}

\section{Introduction} \label{sec:intro}

The development of experimental technologies over the past two decades has shed light on the stochastic nature of dynamics inside biological cells \cite{Noise,Elowitz}. Indeed the small volume of a cell and the presence of several biomolecular species with low copy-numbers or population-sizes pushes the intracellular reaction dynamics outside the regime of validity of deterministic descriptions based on ordinary differential equations (ODEs). Consequently, it has been acknowledged that the evolution of the copy-numbers of the species in an intracellular reaction network is better represented by a continuous-time Markov chain (CTMC) model which naturally accounts for randomness in the dynamics, caused by the intermittent nature of the reactions in the presence of low copy-number species \cite{Goutsias}. In the past few years such stochastic models have become a popular tool for studying the effects of dynamical randomness \cite{Rao,McAdams}.

Estimation of the sensitivity of various system observables w.r.t. model parameters is often very important. The values of these parameters are typically inaccurate or uncertain, and parameter sensitivities provide a way to quantify the effects of this imprecision. If the sensitivity value of some output of interest w.r.t. model parameter $\theta$ is large, then the effects of imprecision will also be significant and one might devote more effort in determining $\theta$ more precisely. Apart from this obvious application, sensitivity values are useful for fine-tuning outputs \cite{Feng}, understanding network architectures \cite{Stelling} and parameter inference \cite{Fink2009}.

In deterministic models with dynamics represented as ODEs, estimation of parameter sensitivities is quite straightforward and it can be accomplished by solving another coupled system of ODEs (see Section \ref{sec:mainresults}). This simplicity breaks down for stochastic models and estimation of parameter sensitivity becomes quite challenging. This is mainly due to the intractability of the associated \emph{Chemical Master Equation} (CME) (\ref{defn_cme}) that describes the evolution of the probability distribution of the CTMC representing reaction dynamics. Most of the existing methods for estimating parameter sensitivities in the stochastic setting require simulations of the \emph{exact} stochastic dynamics \cite{IRN,Gir,KSR1,KSR2,DA,Gupta,Gupta2}, with methods such as Gillespie's \emph{stochastic simulation algorithm} (SSA) \cite{GP} or the \emph{next reaction method} (NRM) \cite{NR}. These simulation methods generate each reaction event, making them computationally infeasible for \emph{multiscale} reaction networks that are characterized by reactions firing at several different timescales. For such networks, the simulation proceeds at a speed which is inversely proportional to the \emph{fastest} timescale, and hence generating the whole stochastic trajectory takes a very long time. Recently in \cite{gupta2017estimation}, a new approach for estimating sensitivities has been proposed that only requires stochastic trajectories simulated with approximate tau-leap methods \cite{tleap1,tleap2}, that save on computational costs by aggregating reaction firings over small time-intervals. While this approach might be useful in certain contexts, it is likely to be inadequate for multiscale networks because of the ``stiffness" issue which limits the accuracy of tau-leap schemes \cite{Rathinam2003,cao2004numerical}.

As multiscale networks are commonplace in Systems Biology \cite{Ball,HWKang,crudu2009hybrid}, it is important to develop theoretical and computational tools for sensitivity estimation for stochastic models of such networks. This paper is a contribution in this direction, and it aims to demonstrate how sensitivity analysis can be efficiently carried out using ``model reductions" derived by exploiting multiscale properties of a network \cite{crudu2009hybrid,HWKang}. In such approaches, the complexity is reduced by removing the \emph{fast} reactions in the network by either applying the \emph{quasi-stationary approximation} (QSA) \cite{ssSSA,weinan1,weinan2} or by treating the firing of these reactions continuously (as in deterministic models) rather than discretely. Often both these simplifications need to be applied together to obtain a \emph{fully} reduced model that can accurately capture the original CTMC dynamics for the multiscale model but is computationally much simpler to simulate. A systematic procedure for deriving these reduced models was provided in \cite{HWKang} by explicitly accounting for the fact that disparities in reaction timescales could arise due to two reasons - differences in the magnitudes of reaction-rate constants and variation in copy-number scales of the network species. In this framework, the states of certain species are \emph{discrete} copy-numbers while for other species the states are concentrations that evolve \emph{continuously}. To apply QSA, an ergodic subnetwork that operates at the \emph{fastest} timescale is identified and its stochastic dynamics is assumed to relax to stationarity almost instantaneously at the slower timescale. After application of QSA often any fast reactions that still remain, only affect the remaining species in such a way that they can be treated continuously. As the discrete species are still evolving as a CTMC, the dynamics of the resulting reduced model is a hybrid process with both discrete and continuous components. In particular, this process is a \emph{Piecewise Deterministic Markov process} (PDMP) \cite{davis1993markov} which has received a lot of attention in recent years, especially because of their importance in modeling multiscale biological phenomena \cite{rudnicki2017piecewise}.

The model reduction arguments we just mentioned are described mathematically in Section \ref{sec:pdmpconv}, where it is shown how the original multiscale CTMC dynamics converges to the PDMP limit, as a certain scaling parameter $N$, denoting system volume or the overall population-size, tends to infinity. In this context, the main contributions of the paper are twofold. Firstly we establish that under certain conditions, as the scaling parameter $N  \to \infty$, the sensitivities for the original multiscale model are guaranteed to converge to the corresponding sensitivities for the limiting PDMP model (Theorem \ref{thm:convergence}) and secondly we express the PDMP sensitivity in a simple form which clearly separates the sensitivity contribution due to discrete and continuous reactions (Theorem \ref{thm:representation}). As we demonstrate, such a representation of PDMP sensitivity makes it amenable to estimation by existing methods, and helps in leveraging that the computation of parameter sensitivity is quite straightforward and very efficient for continuous dynamics. Observe that despite the PDMP convergence of the multiscale dynamics, the sensitivity convergence result, Theorem \ref{thm:convergence}, in non-trivial because the $\theta$-derivative and the limit $N  \to \infty$ may not necessarily commute. This commutation needs to be checked by careful analysis and the main difficulty arises due to interactions between discrete and continuous part of the dynamics, through the propensity functions. These interactions create intricate dependencies which need to be carefully disentangled to prove the aforementioned results. This is done using coupling arguments based on the random time-change representation by Kurtz (see Chapter 6 in \cite{EK}). Note that the sensitivity convergence result for QSA application, analogous to Theorem \ref{thm:convergence}, was established in our earlier paper \cite{gupta2014sensitivity}. Hence our result on sensitivity convergence in the PDMP limit is complementary to the result in \cite{gupta2014sensitivity}, and it completes the mathematical analysis needed to show how accurate sensitivity approximations can be derived using model reductions for multiscale networks. We illustrate this using a couple of computational examples in Section \ref{sec:example}.

This paper is organized as follows. In Section \ref{sec:prelim} we describe stochastic models for multiscale reaction networks and present the associated PDMP convergence result. In Section \ref{sec:mainresults} we state and explain our main results. These results are illustrated with examples in Section \ref{sec:example} and proved in the Appendix. Finally in Section \ref{sec:concl} we conclude and present directions for future research.

\section*{Notation}

Throughout the paper $\N_0$, $\Z$, $\R$ and $\R_+$ denote the set of nonnegative integers, all integers, all real numbers and nonnegative real numbers respectively. The cardinality of any set $A$ is denoted by $|A|$ and for any two real numbers $a,b \in \R$ their minimum is denoted by $a \wedge b$. In any Euclidean space $\R^n$, $\langle \cdot, \cdot \rangle$ denotes the standard inner product and $\| \cdot \|$ is the corresponding norm. The vector of all zeros and all ones in $\R^n$ is denoted by ${\bf 0}$ and ${\bf 1}$ respectively. The transpose of a vector/matrix $M$ is $M^*$. All vectors should be treated as column vectors unless otherwise stated.

\section{Preliminaries} \label{sec:prelim}

\subsection{A stochastic reaction network}

Consider a reaction network with $S$ species, denoted by ${\bf S}_1,\dots,{\bf S}_{S}$, which can undergo $K$ reactions of the form
\begin{align}
\label{reactionform}
\sum_{i=1}^{S} \nu_{ik} {\bf S}_i \longrightarrow  \sum_{i=1}^{S} \nu'_{ik} {\bf S}_i. 
\end{align}
Here $\nu_{ik}$ and $\nu'_{ik}$ are nonnegative integers that correspond to the number of molecules of species ${\bf S}_i$ that the $k$-th reaction \emph{consumes} and \emph{produces} respectively. In the stochastic model of such a reaction network, the state at any time is the vector \\ $x=(x_1,\dots,x_S) \in \N^S_0$ of copy-numbers or population-sizes of all the $S$ species and these states evolve according to a continuous time Markov chain (CTMC). For each $k =1,\dots,K$, let $\zeta_k = ( \nu'_{1k} - \nu_{1k} , \dots,  \nu'_{S k} - \nu_{Sk}  ) \in \Z^S$ be the vector of copy-number changes caused by reaction $k$ and let $\lambda'_k(x)$ be the rate of firing of reaction $k$ at state $x$. The function $\lambda'_k : \N^{S}_0  \to \R_+$ is called the \emph{propensity function} and the vector $\zeta_k$ is called the \emph{stoichiometric} vector for reaction $k$. Noting that the state will change from $x$ to $(x + \zeta_k)$ upon firing of reaction $k$, we can specify the CTMC describing the reaction kinetics by its generator\footnote{The generator of a Markov process is an operator specifying the infinitesimal rate of change of the distribution of the process (see Chapter 4 in \cite{EK} for more details)}
\begin{align*}
\mathbb{A} f (x) = \sum_{k = 1}^K \lambda'_k(x) ( f(x+\zeta_k) - f(x) ),
\end{align*}
defined  for any bounded function $f$ on $\N^S_0$. An alternative and often a more useful way to represent this CTMC $( X(t) )_{t \geq 0}$ is by Kurtz's random time-change representation (see Chapter 7 in \cite{EK}) given by
\begin{align*}
X(t) = X(0) + \sum_{k=1}^K Y_k\left(  \int_{0}^{t} \lambda' _k(  X(s) )  ds \right) \zeta_k,
\end{align*}
where $\{ Y_k : k=1,\dots,K \}$ is a family of independent unit rate Poisson processes.

The \emph{forward} Kolmogorov equation for this CTMC is popularly known as the \emph{Chemical Master Equation} (CME) and it describes the time-evolution of the probability distribution of $ (X(t))_{t  \geq 0}$. The CME is a system of following ordinary differential equations (ODEs) 
\begin{align}
\label{defn_cme}
\frac{d p(x,t)}{dt} = \sum_{k=1}^K \lambda'_k ( x -\zeta_k ) p( x - \zeta_k,t) - \sum_{k=1}^K \lambda'_k(x) p(x,t)
\end{align}
for each accessible state $x \in \N^S_0$. Here $p(x,t) = \mathbb{P}( X(t) = x )$ is the probability that the reaction dynamics $ (X(t))_{t \geq 0 } $ is in state $x$ at time $t$. Often the number of accessible states is very large or infinite and so the CME cannot be directly solved. Instead its solutions are estimated via Monte Carlo approaches like Gillespie's well-known \emph{stochastic simulation algorithm} (SSA) \cite{GP} or the  \emph{next reaction method} (NRM) \cite{NR,anderson2007modified} which is based on the random time-change representation \eqref{defn_cme}. Such methods recover the correct solution of the CME in the infinite sample-size limit, but they simulate each sample path of $(X(t) )_{t \geq 0}$ \emph{exactly}, accounting for each and every reaction event. Unsurprisingly these approaches become exorbitantly computationally expensive for even moderately sized reaction networks. To mitigate this problem, alternate simulation approaches, like $\tau$-leap methods \cite{tleap1,tleap2} have been developed, that sacrifice the exactness of the sample paths for gain in computational feasibility.

\subsection{Multiscale models} \label{sec:multiscalemodels}

Many reaction networks encountered in fields like Systems Biology are multiscale in nature, i.e. they have reactions firing at several timescales. The exact simulation methods like SSA or NRM often become completely impractical for multiscale networks, because these methods proceed with a time-step which is inversely proportional to the fastest reaction timescale. This issue is related to the problem of ``stiffness" of the CME corresponding to multiscale networks, which is known to cause difficulties in using tau-leap simulation approaches \cite{Rathinam2003,cao2004numerical}.

As working with a \emph{pure-jump} CTMC description of the dynamics is difficult for multiscale networks, researchers have come up with results that show that under certain conditions it is possible to exploit the multiscale features of a network, in order to ``reduce" the model complexity and work with simplified descriptions of the dynamics, that are good approximations of the original dynamics. Often this simplified process is \emph{hybrid}, having both discrete and continuously evolving components, that are mutually coupled \cite{HWKang,crudu2009hybrid}. A commonly arising hybrid process is a \emph{Piecewise Deterministic Markov Process} (PDMP), whose continuous part evolves as a system of ODEs while the discrete part undergoes jumps like the original CTMC. 

A systematic, mathematically rigorous approach for identifying PDMP approximations of multiscale stochastic reaction networks is developed in \cite{HWKang} under the assumption of \emph{mass-action kinetics} \cite{DASurvey}, i.e. each propensity function $\lambda'_k : \N^S_0 \to \R_+$ is given by
\begin{align}
\label{defn:massactionkinetics}
\lambda'_k (x_1,\dots,x_S) =  \kappa'_k  \prod_{i =1}^{S} \frac{ x_i(x_i-1)\dots (x_i - \nu_{ik} +1 ) }{  \nu_{ik} ! }
\end{align}
where $\kappa'_k$ is the reaction-rate constant for the $k$-th reaction and $\nu_{ik}$-s are as in \eqref{reactionform}. In the rest of this section, we describe this approach and state the limit theorem that validates the PDMP approximation of the reaction dynamics of a multiscale network. The multiplicity in reaction timescales could be due to differences in species copy-number scales and to discrepancies in the magnitudes of reaction rate-constants. Hence one needs to account for both these sources of variations to construct the required dynamical approximation. Moreover whether a reaction timescale is \emph{fast} or \emph{slow} can only be decided relative to the timescale of observation which must be chosen a priori. To accommodate these considerations a scaling parameter $N_0$ is chosen that corresponds to the system volume or the magnitude of the population-size of an abundant species. Thereafter each species ${\bf S}_i$ is assigned an \emph{abundance} factor $\alpha_i \geq 0$ and each reaction $k$ is assigned a scaling constant $\beta_k \in \R$. These parameters serve as \emph{normalizing} constants, in the sense that if $X_i(t)$ denotes the copy-number of species ${\bf S}_i$ at time $t$, then $N^{ -\alpha_i }_0  X_i( N^{\gamma}_0 t)$ is roughly of order $1$ or $O(1)$ on the timescale of interest $\gamma \in \R$. Similarly the scaled rate constant $\kappa_k = \kappa'_k N^{  -\beta_k}_0$ is $O(1)$ for each reaction $k$. There exists computational approaches to automatically select these normalizing parameters $\alpha_i$-s and $\beta_k$-s \cite{hepp2015adaptive}.

Let $(X^{N_0} (t) )_{ t \geq 0 } $ be the $\N^{S}_0$-valued CTMC representing the dynamics of the multiscale network. Once the choice of parameters $\alpha_i$-s and $\beta_k$-s has been made, we define our scaled process $( Z^{N_0,\gamma}(t)  )_{t \geq 0}$ as
\begin{align}
\label{defn:scaledprocess}
Z^{N_0,\gamma}(t) = \Lambda_{N_0} X^{ N_0 }( N^{\gamma}_0 t),
\end{align}
where $ \Lambda_{N_0} = \textnormal{Diag}( N_0^{ - \alpha_1} , \dots, N_0^{ - \alpha_{S} } )$ is the $S \times S$ diagonal matrix containing all the species scaling factors. Our aim is to analyze the original process $X^{N_0}$ at the observation timescale $\gamma$, using the scaled process $Z^{N_0,\gamma}$. Note that for each species ${\bf S}_i$, $\alpha_i$ is chosen in such a way that the \emph{normalized abundance} of this species, given by $Z^{N_0,\gamma}_i(t) =  N^{ -\alpha_i }_0  X^{ N_0 }( N^{\gamma}_0 t)$ remains $O(1)$ over compact time-intervals.

Replacing $N_0$ by $N$ we arrive at a family of processes $\{Z^{N,\gamma}\}$ parameterized by $N$. If we can show that $Z^{N,\gamma}$ converges in the sense of distributions in the Skorohod topology on $\R^S$(see Chapter 3 in \cite{EK}) to some process $Z$, then it would imply that for a large value of $N_0$ we have
\begin{align}
\label{defn:distributionalconv}
\E( f(  Z^{N_0,\gamma}(t)  ) ) \approx \lim_{N  \to \infty} \E( f(  Z^{N,\gamma}(t)  ) ) = \E( f( Z(t) ) ),
\end{align}
 for any $t \geq 0$ and any continuous and bounded function $f : \R^S \to \R$. In other words, the probability distribution of $Z(t)$ is close to the probability distribution of $ Z^{N_0,\gamma}(t) =  \Lambda_{N_0} X^{ N_0 }( N^{\gamma}_0 t)$. In this paper we are interested in situations where the limiting process is a PDMP that is usually much simpler to simulate and analyze than the original multiscale process. Hence passing to the limit achieves a ``model reduction" which can be significantly helpful in analyzing multiscale networks. We now discuss this convergence to a PDMP in greater detail.

\subsection{PDMP Convergence}\label{sec:pdmpconv}

The random time-change representation of process $Z^{N,\gamma}$ is
\begin{align}
\label{scaledprocess:rtc1}
Z^{N,\gamma} (t) = Z^{N,\gamma} (0) + \sum_{k=1}^K Y_k \left( N^\gamma  \int_0^{t  }   \lambda'_k (  \Lambda^{-1}_{N}  Z^{N,\gamma} (s) )ds \right) \Lambda_N \zeta_k,  
\end{align}
where $\{ Y_k : k=1,\dots,K \}$ is a family of independent unit rate Poisson processes as before and $Z^{N,\gamma} (0) $ is the initial condition chosen as
\begin{align*}
Z^{N,\gamma} (0) = Z^{N_0,\gamma} (0) =   \Lambda_{N_0} X^{N_0} (0).
\end{align*}
As $\lambda'_k$ is given by the mass-action form \eqref{defn:massactionkinetics} with $\kappa_k = \kappa'_k N^{ \beta_k } $ it can be seen that for any $z$
\begin{align}
\label{propapprox}
\lambda'_k (  \Lambda^{-1}_{N}  z ) =N^{ \beta_k + \langle \nu_k , \alpha \rangle } \lambda^{N}_k (z),
\end{align}
where $\nu_k = ( \nu_{1k} ,\dots, \nu_{S k} )$, $\alpha = ( \alpha_1,\dots, \alpha_S )$ and $\langle \cdot, \cdot \rangle$ denotes the standard inner product on $\R^S$. The function $ \lambda^{N}_k $ is given by
\begin{align*}
 \lambda^{N}_k (z_1,\dots,z_S) = \kappa_k \prod_{i=1}^S \frac{z_i(z_i - N^{ -\alpha_i}  )  \dots  (z_i - N^{ -\alpha_i} \nu_{ik} +  N^{ -\alpha_i}  )  }{ \nu_{ik} ! }
\end{align*}
and it is immediate that 
\begin{align}
\label{defn:prop_limit}
\lim_{ N  \to \infty }   \lambda^{N}_k (z_1,\dots,z_S) & =  \lambda_k (z_1,\dots,z_S) \notag \\
&:=  \kappa_k \left( \prod_{ i \in \mathcal{S}_d } \frac{z_i(z_i - 1 )  \dots  (z_i  -  \nu_{ik} + 1 )  }{ \nu_{ik} ! } \right) \left( \prod_{ i \in \mathcal{S}_c } \frac{z^{ \nu_{ik} }_i }{ \nu_{ik} ! } \right).
\end{align}
Here $\mathcal{S}_d := \{ i =1,\dots,S : \alpha_i = 0 \}$ and $\mathcal{S}_c :=  \{ i =1,\dots,S : \alpha_i > 0 \}$ are sets of species with zero abundance factors and positive abundance factors respectively. Species in $\mathcal{S}_d$ will be treated \emph{discretely} while the species in $\mathcal{S}_c$ will be treated \emph{continuously} by the limiting PDMP described by process $Z$. Let $S_c = | \mathcal{S}_c  |$ and $S_d = | \mathcal{S}_d  |$. Without loss of generality we can assume that $\mathcal{S}_c = \{1,\dots, S_c\}$ and $\mathcal{S}_d = \{S_c+ 1,\dots, S\}$. One can view $\rho_k :=\beta_k +  \langle \alpha,\nu_k \rangle $ as the ``natural" timescale for reaction $k$, which takes into account both reactant copy-number variation and the magnitude of the associated rate constant. Due to \eqref{propapprox}, the random time-change representation \eqref{scaledprocess:rtc1} for species ${\bf S}_i$ has the form
\begin{align}
\label{scaledprocess:rtc2}
Z^{N,\gamma}_i(t) =  Z^{N,\gamma}_i(0) + \sum_{k=1}^K  N^{ -\alpha_i} Y_k \left( N^{\rho_k+ \gamma}  \int_0^{t  }   \lambda^N_k (  Z^{N,\gamma} (s) )ds \right) \zeta_{ki}.
\end{align}
Hence the timescale at which the normalized mass of species ${\bf S}_i$ changes at a rate of $O(1)$ is $\gamma_i := \alpha_i - \max\{  \rho_k : \zeta_{ik} \neq 0 \}$. Higher $\gamma_i$ implies a slower rate of change for species ${\bf S}_i$ and so the \emph{fastest} timescale for the evolution of the normalized population-sizes of the species is given by 
\begin{align}
\label{pdmp_conv_timescale}
r = \min_{i =1,\dots,S}  \gamma_i. 
\end{align} 
We now set the observation timescale as $\gamma = r$ and study the limiting behavior of process $Z^{N,r}$ as $N  \to \infty$. Note that for any reaction $k$ and species ${\bf S}_i$ such that $\zeta_{ki} \neq 0$ we must have
\begin{align*}
r + \rho_k \leq \gamma_i + \rho_k \leq \gamma_i + \max\{  \rho_k : \zeta_{ik} \neq 0 \} \leq \alpha_i.
\end{align*}
If this inequality is strict then reaction $k$ will not affect species ${\bf S}_i$ in the limit $N \to \infty$, because the rate at which this reaction modifies the copy-numbers of species ${\bf S_i}$ is $(r + \rho_k)$ which is strictly less than the abundance factor $\alpha_i$ for this species. So for each $k$, we define $\hat{\zeta}_k$ to be the vector obtained by transforming the stoichiometric vector $\zeta_k$ as
\begin{align*}
\hat{\zeta}_{ik} = \left\{
\begin{array}{cc}
\zeta_{ik} & \textnormal{ if } \alpha_i = (r + \rho_k) \\
0 & \textnormal{ otherwise}.
\end{array} \right.
\end{align*}

Let $\mathcal{R}_c:= \{ k =1,\dots,K : r + \rho_k > 0   \} $ and $\mathcal{R}_d = \{ k = 1,\dots,K: r + \rho_k = 0   \}$ denote the sets of \emph{continuous} and \emph{discrete} reactions respectively and suppose that process $Z^{N,r}$ converges to some process $Z$ as $N  \to \infty$. The \emph{Poisson law of large numbers} (PLLN) ensures that for any $T > 0$
\begin{align*}
\sup_{0 \leq t \leq T } \left| \frac{ Y(Nt) }{N} -t \right|   \stackrel{a.s.}{\longrightarrow} 0 \ \textnormal{ as } \ N \to \infty.
\end{align*}
This result along with limit \eqref{defn:prop_limit}, suggests that for any reaction $k \in \mathcal{R}_c$
\begin{align*}
N^{ -\alpha_i} Y_k \left( N^{\rho_k+ \gamma}  \int_0^{t  }   \lambda^N_k (  Z^{N,r} (s) )ds \right) \zeta_{ki} \stackrel{a.s.}{\longrightarrow}  \left( \int_0^{t  } \lambda_k (  Z(s) )ds\right)  \hat{\zeta}_{ki}  \ \textnormal{ as } \ n \to \infty.
\end{align*}
Hence in the limit, the discrete and intermittent nature of the firing of reaction $k$ is lost and replaced by a continuous process. On the other hand for any $k \in \mathcal{R}_d$, this discreteness is preserved and we have
\begin{align*}
N^{ -\alpha_i} Y_k \left( N^{\rho_k+ \gamma}  \int_0^{t  }   \lambda^N_k (  Z^{N,r} (s) )ds \right) \zeta_{ki} \stackrel{a.s.}{\longrightarrow} Y_k \left( \int_0^{t  } \lambda_k (  Z(s) )ds\right)  \hat{\zeta}_{ki}  \ \textnormal{ as } \ n \to \infty.
\end{align*}
From these limits one can argue that the limiting process $Z$ is a PDMP and it can be expressed as
\begin{align*}
Z(t) = (x(t) , U(t) )
\end{align*}
where $x(t) \in \R^{S_c}$ and $U(t) \in \N^{S_d}_0$ are time $t$ state-vectors for the continuous species in $\mathcal{S}_c$ and the discrete species in $\mathcal{S}_d$ respectively. This PDMP evolves according to 
\begin{align}
\label{defn_pdmp}
x(t) &= x(0) + \sum_{k \in \mathcal{R}_c} \left( \int_{0}^t \lambda_k( x(s), U(s) ) ds \right) \zeta^{(c)}_k \\
U(t) & = U(0) + \sum_{k \in \mathcal{R}_d} Y_k \left( \int_{0}^t \lambda_k( x(s), U(s) ) ds  \right) \zeta^{(d)}_k,  \notag
\end{align}
with $\{Y_k\}$ being independent unit-rate Poisson processes as above, and $\zeta^{(c)}_k$ (resp. $ \zeta^{(d)}_k$) denoting the first $S_c$ (resp. last $S_d$) components of the vector $\hat{\zeta}_k$. The convergence of the process $Z^{N,r}$ to PDMP $Z$ is established rigorously by Theorem 4.1 in \cite{HWKang}. This convergence will hold until the \emph{explosion} time of process $Z$ but in this paper we assume that this explosion time is infinite almost surely (see Assumption \ref{assmp:key}).

Even though the above framework for deriving PDMP convergence is only described for mass-action kinetics, even with more general kinetics the same PDMP convergence result will apply as long as we can make sure that \eqref{propapprox} holds and $\lambda^N_k$ converges point-wise to some limiting function $\lambda_k$. We end this section with an important remark regarding model reduction for multiscale networks.
\begin{remark}
\label{rem:qssa}
As discussed in Section \ref{sec:intro}, often the limiting PDMP model is derived after some of the fast reactions among discrete species have been eliminated using the quasi-stationary assumption (QSA). This has been described in detail in \cite{HWKang} and in our earlier paper \cite{gupta2014sensitivity} we showed that under some conditions the parameter sensitivities converge under QSA application, which can be used to efficiently estimate parameter sensitivity for multiscale networks. In this paper we derive an analogous result, Theorem \ref{thm:convergence}, for PDMP limits and thereby complete the story on sensitivity analysis of multiscale networks using model reductions.   
\end{remark}

\subsection{PDMP Simulation}

In this section we briefly discuss how the PDMP $(Z(t))_{t \geq 0}$ defined by \eqref{defn_pdmp} can be simulated. Many strategies exist for efficient simulation of PDMPs \cite{crudu2009hybrid,duncan2016hybrid,hepp2015adaptive}. In this paper we shall simulate PDMPs by adapting Algorithm 2 in \cite{duncan2016hybrid}, which is a generalization of the \emph{next reaction method} (NRM) \cite{NR} and is based on representation \eqref{defn_pdmp} using time-changed Poisson processes. We present this simulation scheme as Algorithm \ref{algo_pdmp_sim}. In this algorithm, the ODEs for the evolution of the continuous states (i.e. $x(t)$) are solved with Euler discretization and the \emph{internal times} 
\begin{align*}
T_k = \int_{0}^t \lambda_k( x(s), U(s) ) ds
\end{align*}
for all the discrete reactions $k \in \mathcal{R}_d$ are updated in the same way. For each such reaction, $P_k$ denotes the next jump time of the unit-rate Poisson process $Y_k$ and as soon as $T_k$ exceeds $P_k$, reaction $k$ is fired, the discrete state $U(t)$ is updated and $P_k$ is assigned a new value according to a realization of an independent exponential random variation with rate $1$.

\begin{algorithm}[H]  
\caption{Simulates the PDMP $(x(t),U(t))_{t \geq 0}$ in the time-interval $[0,T]$ with initial state $(x_0,U_0)$.}      
 \label{algo_pdmp_sim}
 \begin{algorithmic}[1]
\State {\bf Initialization:} Set $t = 0$, $x(t) = x_0$ and $U(t) = U_0$. For each $k \in \mathcal{R}_d$ set $T_k = 0$ and set $P_k = -\log(u)$ for $u \sim \textnormal{Uniform}[0,1]$.
\State {\bf Continuous update:} Pick a time-discretization step $\delta t$. Set
\begin{align*}
x(t + \delta t) &= x(t) +  \left( \sum_{k \in \mathcal{R}_c} \lambda_k( x(t), U(t) )  \zeta^{(c)}_k \right) \delta t \\
\textnormal{and} \qquad T_k & = T_k + \lambda_k( x(t) ,U(t) ) \delta t \qquad \textnormal{for each} \qquad k \in \mathcal{R}_d.
\end{align*}
\State {\bf Discrete update:} For each reaction $k \in \mathcal{R}_d$, if $T_k > P_k$ then set
\begin{align*}
U(t + \delta t) = U(t) + \zeta^{(d)}_k \qquad \textnormal{and} \qquad P_k = P_k -\log(u) \quad \textnormal{for} \quad u \sim \textnormal{Uniform}[0,1].
\end{align*}
\State Update $t \gets t+\delta t$. If $t > T$ then {\bf stop}, or else return to step 2.
\end{algorithmic}
\end{algorithm}

\section{Main Results} \label{sec:mainresults}

We now assume that propensity functions of our multiscale network depend on a real-valued parameter $\theta$. We denote these propensity functions as $\lambda'_k(x, \theta)$ and we let $( X^{N_0}_\theta(t) )_{t \geq 0}$ be the CTMC describing the dynamics of the multiscale network. Let $( Z^{N_0}_\theta(t) )_{t \geq 0 }$ be the scaled process defined by \eqref{defn:scaledprocess} with the timescale of observation chosen as $\gamma = r$ (see \eqref{pdmp_conv_timescale}). We assume that convergence to a PDMP $(Z_\theta(t))_{t \geq 0 }$ holds, where $Z_\theta(t) = (x_\theta(t) , U_\theta(t) )$ evolves according to
\begin{align}
\label{defn_pdmp_param}
x_\theta(t) &= x_0 + \sum_{k \in \mathcal{R}_c} \left( \int_{0}^t \lambda_k( x_\theta(s), U_\theta(s) ,\theta) ds \right) \zeta^{(c)}_k \\
U_\theta(t) & = U_0 + \sum_{k \in \mathcal{R}_d} Y_k \left( \int_{0}^t \lambda_k( x_\theta(s), U_\theta(s),\theta ) ds  \right) \zeta^{(d)}_k,  \notag
\end{align}
with $\{Y_k\}$ being independent unit-rate Poisson processes, and $\lambda_k( \cdot,\cdot ,\theta)$ being $\theta$-dependent propensity functions that are related to the original propensity functions $\lambda'_k(\cdot, \theta)$ in the same way as in Section \ref{sec:pdmpconv}. The initial state $(x_0,U_0)$ of this PDMP is deterministic and independent of $\theta$.

Our goal is to estimate the following $\theta$-sensitivity value for the original multiscale network
\begin{align}
\label{defn:sens_main}
S^{N_0}_\theta(f,T) = \frac{ \partial }{ \partial \theta} \E( f(  \Lambda_{N_0}  X^{N_0}_\theta( N^{r}_0  T ) ) ) =  \frac{ \partial }{ \partial \theta} \E( f( Z^{N_0}_\theta(  T ) ) ),
\end{align}
where the function $f : \R^{S_c} \times \N^{S_d}_0 \to \R$ captures the output of interest and $T$ is the time of observation. As discussed in Section \ref{sec:intro}, estimating this sensitivity directly via existing simulation-based approaches \cite{IRN,KSR1,KSR2,DA,Gupta,Gupta2,Gir} is highly cumbersome due to the multiscale nature of the network. Therefore we would like to exploit the PDMP convergence $Z^N_\theta \Rightarrow Z_\theta$ to obtain an accurate estimate of $S^{N_0}_\theta(f,T) $. For this purpose, our first result, Theorem \ref{thm:convergence}, proves that we have 
\begin{align}
\label{sens_conv_result}
\lim_{N  \to \infty}  \frac{ \partial }{ \partial \theta} \E( f( Z^{N}_\theta(  T ) ) ) = \hat{S}_\theta(f,T):=  \frac{ \partial }{ \partial \theta} \E( f( Z_\theta(  T ) ) ).
\end{align}
Once this convergence is verified, then for large $N_0$ it is possible to estimate $S^{N_0}_\theta(f,T) $ by estimating $\hat{S}_\theta(f,T)$ instead. Our second result, Theorem \ref{thm:representation}, will enable us to efficiently estimate the sensitivity  $\hat{S}_\theta(f,T)$ with simulations of the PDMP $Z_\theta$. As performing PDMP simulations is usually much easier than simulating the original multiscale CTMC, our approach can significantly reduce the computational time required to estimate the quantity of interest $S^{N_0}_\theta(f,T) $. This is demonstrated by numerical examples in Section \ref{sec:example}.

Let $F(x,U,\theta)$ be any real-valued function on $\R^{S_c} \times \N^{S_d}_0 \times \R$ which is differentiable in the first $S_c$ coordinates. We denote the gradient of this function w.r.t. these coordinates as $\nabla F(x,U,\theta)$ and the partial derivative w.r.t.\ $\theta$ as $\partial_{\theta} F(x,U ,\theta)$. Moreover for any discrete reaction $k \in \mathcal{R}_d$, $\Delta_{k} F(x,U,\theta)$ refers to the difference
\begin{align*}
\Delta_{k} F(x,U,\theta)= F(x , U  +\zeta^{ (d) }_k ,\theta ) - F(x,U,\theta).
\end{align*}
We now specify certain assumptions that are needed to prove our main results.
\begin{assumption}
\label{assmp:key}
\begin{itemize}
\item[(A)] The observation function $f : \R^{S_c} \times \N^{S_d}_0 \to \R$ is continuously differentiable in the first $S_c$ coordinates.
\item[(B)] For each reaction $k$, as $N  \to \infty$, the convergence $\lambda^N_k(\cdot, \theta) \to  \lambda_k(\cdot, \theta)$ (see \eqref{defn:prop_limit}) holds uniformly over compact sets. The same is true for the derivatives \\ $\nabla \lambda^N_k(\cdot, \theta) \to  \nabla \lambda_k(\cdot, \theta)$ and $\partial_\theta \lambda^N_k(\cdot, \theta) \to \partial_\theta  \lambda_k(\cdot, \theta)$.
\item[(C)] There exists a compact set $C \subset \R_+^{S_c}  \times \N^{S_d}_0$ which is large enough to contain the scaled dynamics in the time interval $[0,T]$ almost surely in the limit $N  \to \infty$, i.e.
\begin{align*}
\limsup_{N \to \infty} \sup_{ t \in [0,T] }\mathbb{P}\left(  Z^N_\theta(t) \notin C  \right) = 0.
\end{align*}
\end{itemize}
\end{assumption}
The conditions in parts (A) and (B) of this assumption are quite mild and likely to be satisfied in most situations of interest. However the condition in part (C) is relatively stricter and it essentially requires the multiscale dynamics to stay inside a compact set, which would only hold if there is a global mass conservation relation among the species. We make this strict assumption here for technical convenience but we note that this condition can be substituted with a weaker condition that imposes restrictions on the growth rate of propensity functions for mass-producing reactions (see \cite{Gupta} for more details).

\begin{theorem}[Sensitivity Convergence]
\label{thm:convergence}
Suppose that Assumption \ref{assmp:key} is satisfied and $Z^N_\theta \Rightarrow Z_\theta$ as $N  \to \infty$. Then convergence \eqref{sens_conv_result} holds, which shows that $\theta$-sensitivity for the multiscale network converges to the corresponding $\theta$-sensitivity for the limiting PDMP model.
\end{theorem}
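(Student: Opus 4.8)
The plan is to show that $\partial_\theta$ and $\lim_{N\to\infty}$ commute by running both through a common pathwise (``coupling'') representation of the sensitivity and then taking the limit term by term. For each fixed $N$ I would first establish such a representation for $\partial_\theta\E(f(Z^N_\theta(T)))$: starting from the random time-change form \eqref{scaledprocess:rtc2}, couple $Z^N_\theta$ and $Z^N_{\theta+h}$ along common unit-rate Poisson processes via a split coupling (a common part firing at the minimum of the two rates, with independent thinned parts absorbing the rate discrepancies), let $h\to0$ in the difference quotient, and use the Markov property to propagate each perturbation forward to time $T$. This should yield
\[
\frac{\partial}{\partial\theta}\E\big(f(Z^N_\theta(T))\big)=\sum_{k=1}^{K}\E\!\left[\int_0^T N^{\rho_k+r}\,\partial_\theta\lambda^N_k\big(Z^N_\theta(s),\theta\big)\,\Psi^N_k\big(s,Z^N_\theta(s)\big)\,ds\right],
\]
where $\Psi^N_k(s,z)$ is the ``perturbation response'', i.e.\ the expected change in $f(Z^N_\theta(T))$ induced by an extra firing of reaction $k$ at time $s$ from state $z$. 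Differentiability in $\theta$ and the validity of this representation follow from smoothness of the propensities in $\theta$ together with the compact containment of Assumption~\ref{assmp:key}(C), along the lines of \cite{Gupta,Gupta2}. The same construction applied to the PDMP gives the analogous formula for $\hat S_\theta(f,T)$, with $\lambda^N_k$ replaced by $\lambda_k$ and the prelimit response replaced by a limiting response $\Psi_k$: for $k\in\mathcal R_d$ this is the effect of a single extra jump $\zeta^{(d)}_k$, and for $k\in\mathcal R_c$ (after the $N^{\rho_k+r}$ rate factor is absorbed against the $O(N^{-\alpha_i})$ size of the scaled stoichiometry) it becomes the pathwise directional derivative of $s\mapsto\E f(Z_\theta(T))$ along $\zeta^{(c)}_k$. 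This last point is essentially the content of Theorem~\ref{thm:representation}.

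\smallskip
\noindent Second, I would prove convergence of the representation term by term, separating $k\in\mathcal R_c$ from $k\in\mathcal R_d$. In each term one already has $\partial_\theta\lambda^N_k(\cdot,\theta)\to\partial_\theta\lambda_k(\cdot,\theta)$ locally uniformly by Assumption~\ref{assmp:key}(B), $Z^N_\theta\Rightarrow Z_\theta$ by hypothesis, and --- since the limiting (hence, with probability tending to $1$, also the prelimit) dynamics stays in the compact set $C$, on which the $C^1$ function $f$ is bounded --- a uniform integrability bound on the integrand. The remaining ingredient is the convergence $\Psi^N_k(s,\cdot)\to\Psi_k(s,\cdot)$, locally uniformly and uniformly in $s\in[0,T]$. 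For $k\in\mathcal R_d$ the extra jump is $O(1)$, so $\Psi^N_k$ is again a difference of expectations of $f$ along the $Z^N$-flow from two nearby starting states, and its convergence reduces to a second application of the PDMP limit theorem (Theorem~4.1 of \cite{HWKang}) to the flow restarted at time $s$. For $k\in\mathcal R_c$ the scaled jump has size $O(N^{-\alpha_i})$, and one must show that this vanishing perturbation of the continuous coordinates, propagated by the $Z^N$-flow and rescaled by $N^{\rho_k+r}$, converges to the pathwise sensitivity of the continuous block of \eqref{defn_pdmp_param}; this is where Assumption~\ref{assmp:key}(A) is used, through differentiability of $f$ in the continuous coordinates, and where a Gronwall estimate controls the linear-in-perturbation growth of the response.

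\smallskip
\noindent The main obstacle is precisely the convergence and regularity of the responses $\Psi^N_k$, because the discrete and continuous components of $Z^N_\theta$ are coupled through the propensities: perturbing the continuous coordinates at time $s$ changes the rates of the discrete reactions, hence alters which discrete jumps occur afterwards, and those jump times depend non-smoothly on the perturbation --- so the flow cannot simply be differentiated pathwise. To handle this I would introduce a nested split coupling of the perturbed and unperturbed flows over $[s,T]$ (again sharing Poisson processes, with independent thinning carrying the rate mismatch), bound the expected number of discrepancy-creating events by the local oscillation of the propensities times the perturbation size --- finite by Assumption~\ref{assmp:key}(C) --- and close the estimate with Gronwall, obtaining a bound on $\Psi^N_k$ that is uniform in $N$ and in $s\in[0,T]$ and proportional to the perturbation size. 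Combining this uniform control with the pointwise limit, and applying dominated convergence in $s$ and in the path randomness, gives convergence of each term; summing over $k\in\mathcal R_c\cup\mathcal R_d$ yields \eqref{sens_conv_result}, and since the limit of the prelimit representations coincides with the representation of $\hat S_\theta(f,T)$, this shows that $\partial_\theta$ and $\lim_{N\to\infty}$ commute, as claimed.
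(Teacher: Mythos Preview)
Your overall strategy matches the paper's: write an integral representation for the prelimit sensitivity and pass to the limit term by term. Your limit is exactly the paper's intermediate quantity
\[
\bar S_\theta(f,T)=\sum_{k\in\mathcal R_c}\E\!\int_0^T \partial_\theta\lambda_k\,\langle\nabla\Psi_{T-t},\zeta^{(c)}_k\rangle\,dt+\sum_{k\in\mathcal R_d}\E\!\int_0^T \partial_\theta\lambda_k\,\Delta_k\Psi_{T-t}\,dt,
\]
obtained in Proposition~\ref{prop:convergence}. The gap is in your first paragraph, where you assert that ``the same construction applied to the PDMP gives the analogous formula for $\hat S_\theta(f,T)$'' and that this is ``essentially the content of Theorem~\ref{thm:representation}''. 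Neither is true. When you run the split coupling on the PDMP itself there are no discrete firings of the continuous reactions to thin; instead the continuous coordinates $x_\theta$ and $x_{\theta+h}$ separate smoothly by $h\,y_\theta(t)+o(h)$, and this perturbation feeds into the \emph{discrete} rates. The resulting formula (Proposition~\ref{prop:mainprop}) involves $y_\theta$ and the full derivative $D_\theta\lambda_k=\partial_\theta\lambda_k+\langle\nabla\lambda_k,y_\theta\rangle$, not the bare $\partial_\theta\lambda_k$ and $\nabla\Psi_{T-t}$ that appear in $\bar S_\theta$. Theorem~\ref{thm:representation} likewise is stated in terms of $y_\theta$ and $D_\theta\lambda_k$; it does not hand you $\bar S_\theta$.

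So after your term-by-term limit you still owe the identity $\bar S_\theta(f,T)=\hat S_\theta(f,T)$, and this is precisely the hard step. The paper closes it by deriving a separate integral representation for $\nabla\Psi_t(x_0,U_0,\theta)$ via a second coupling in the initial continuous state (Proposition~\ref{prop:secondprop}), which brings in the fundamental solution $\Phi$ of the linearized continuous flow \eqref{defn:phixut}. Substituting this expression for $\nabla\Psi_{T-t}$ into the $\mathcal R_c$-sum of $\bar S_\theta$, using the identity \eqref{defn_ythetat} to reconstitute $y_\theta$ from $\Phi$, differentiating in $t$, and matching terms against Proposition~\ref{prop:mainprop} is what finally shows $\bar S_\theta=\hat S_\theta$. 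Your Gronwall/nested-coupling estimates on the responses $\Psi^N_k$ are relevant to getting the limit $\bar S_\theta$ and to controlling $\nabla\Psi^N\to\nabla\Psi$, but they do not by themselves produce this identification; you need either the paper's $\nabla\Psi$ formula and the ensuing bookkeeping, or an independent argument (e.g.\ differentiating the backward equation for $\Psi_{T-t}$ in $\theta$ and integrating against the PDMP) that you have not supplied.
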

\begin{proof}
The proof is provided in the Appendix.
\end{proof}

Define $\Psi_{t}(x,U,\theta)$ by
\begin{align}
\label{defn_psi}
\Psi_{t}(x,U,\theta) = \E( f(  x_\theta(t) ,U_\theta(t) )  )
\end{align}
for any $t \geq 0$, where $(x_\theta(t), U_\theta(t))_{ t \geq 0 }$ is the PDMP \eqref{defn_pdmp_param} with initial state \\$(x_\theta(0) ,U_\theta(0) ) = (x,U) \in \R_+^{S_c} \times \N^{S_d}_0$. Also let $y_\theta(t)$ be the partial derivative of $x_\theta(t)$ w.r.t.\ $\theta$, under the restriction that $U_\theta(t)$ remains fixed i.e.\ $y_\theta(t)$ satisfies the Initial Value Problem (IVP)
\begin{align}
\label{defn_y_thetat}
\frac{d y_\theta}{dt} &= \sum_{k \in \mathcal{R}_c} \left(  \partial_{\theta}   \lambda_k (x_\theta(t) , U_\theta(t) ,\theta )  +  \left\langle  \nabla  \lambda_k(x_\theta(t) , U_\theta(t) ,\theta ) , y_\theta(t)  \right\rangle \right) \zeta^{(c)}_k  \\
\textnormal{and}  & \qquad y_\theta(0) = {\bf 0}.  \notag
\end{align}
Henceforth for any reaction $k$ let
\begin{align}
\label{defn_part_tod_der}
D_\theta  \lambda_k (x_\theta(t) , U_\theta(t) ,\theta ) =  \partial_{\theta}   \lambda_k (x_\theta(t) , U_\theta(t) ,\theta )  +  \left\langle  \nabla  \lambda_k(x_\theta(t) , U_\theta(t) ,\theta ) , y_\theta(t)  \right\rangle
\end{align}
denote the $\theta$-derivative of the propensity function $ \lambda_k (x_\theta(t) , U_\theta(t) ,\theta )$, where we \emph{include} the $\theta$-dependence of the continuous state $x_\theta(t)$ but \emph{disregard} the $\theta$-dependence of the discrete state $U_\theta(t)$. We are now ready to state our next result which provides a nice representation for the PDMP sensitivity $\hat{S}_\theta(f,T)$ (see \eqref{sens_conv_result}), that would allow us to efficiently estimate this quantity with PDMP simulations.

\begin{theorem}[Sensitivity Representation]
\label{thm:representation}
Suppose Assumption \ref{assmp:key} holds and let \\$(  Z_\theta(t) )_{ t \geq 0 } = ( x_\theta(t), U_\theta(t) )_{t \geq 0}$ be the PDMP given by \eqref{defn_pdmp_param}. Also let $y_\theta(t)$ be the solution of IVP \eqref{defn_y_thetat} and let $D_\theta \lambda_k(  x_\theta(t), U_\theta(t) ,\theta)$ be defined by \eqref{defn_part_tod_der}. Then we can express sensitivity $\hat{S}_\theta(f,T)$ as the sum
\begin{align}
\label{sens_pdmprep1}
\hat{S}_\theta(f,T) &= \hat{S}^{(c)}_\theta(f,T) +  \hat{S}^{(d)}_\theta(f,T),
\end{align}
where 
\begin{align*}
 \hat{S}^{(c)}_\theta(f,T)  = \E\left[  \left \langle \nabla f (x_\theta(T) ,U_\theta(T) ) , y_\theta(T)   \right\rangle \right]
\end{align*}
is the sensitivity contribution of the continuous part of the dynamics and 
\begin{align*}
 \hat{S}^{(d)}_\theta(f,T)  = \sum_{k \in \mathcal{R}_d } \E \left[ \int_0^T   D_{\theta}   \lambda_k (x_\theta(t) , U_\theta(t) ,\theta )    \Delta_k \Psi_{T- t}( x_\theta(t), U_\theta(t), \theta ) dt  \right]
\end{align*}
is the sensitivity contribution of the discrete part of the dynamics.
\end{theorem}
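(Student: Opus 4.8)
The plan is to first split $\hat S_\theta(f,T)$ by a Duhamel formula for the PDMP semigroups, and then recombine the pieces into the form \eqref{sens_pdmprep1} using a first-variation (Dynkin) argument on an augmented PDMP. Write $\mathcal L_\theta$ for the generator of the PDMP \eqref{defn_pdmp_param}, $\mathcal L_\theta g(x,U)=\big\langle\nabla g(x,U),\sum_{k\in\mathcal R_c}\lambda_k(x,U,\theta)\zeta^{(c)}_k\big\rangle+\sum_{k\in\mathcal R_d}\lambda_k(x,U,\theta)\Delta_k g(x,U)$, let $P^\theta_t$ be the associated semigroup, so $\Psi_t(x,U,\theta)=P^\theta_t f(x,U)$ and $\hat S_\theta(f,T)=\partial_\theta\big(P^\theta_T f\big)(x_0,U_0)$. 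From the identity $P^{\theta'}_T-P^{\theta}_T=\int_0^T P^{\theta'}_{T-t}\big(\mathcal L_{\theta'}-\mathcal L_{\theta}\big)P^{\theta}_t\,dt$, dividing by $\theta'-\theta$ and letting $\theta'\to\theta$, one gets
\[
\hat S_\theta(f,T)=\int_0^T\E\!\left[\Big\langle\nabla\Psi_{T-t}(Z_\theta(t),\theta),\ \textstyle\sum_{k\in\mathcal R_c}\partial_\theta\lambda_k(Z_\theta(t),\theta)\,\zeta^{(c)}_k\Big\rangle\right]dt+\sum_{k\in\mathcal R_d}\int_0^T\E\!\left[\partial_\theta\lambda_k(Z_\theta(t),\theta)\,\Delta_k\Psi_{T-t}(Z_\theta(t),\theta)\right]dt,
\]
where $Z_\theta(t)=(x_\theta(t),U_\theta(t))$ starts from $(x_0,U_0)$; call the two summands $C$ and $D$.

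The second step turns $C$ into $\hat S^{(c)}_\theta(f,T)$ plus a correction that merges with $D$. I would enlarge the state to the triple $(x_\theta(t),U_\theta(t),y_\theta(t))$, with $y_\theta$ solving the linear IVP \eqref{defn_y_thetat}; this triple is again a PDMP (its $y$-component evolves by an ODE and does not jump). Applying Dynkin's formula to the time-dependent function $\Phi(t,x,U,y):=\langle\nabla\Psi_{T-t}(x,U,\theta),y\rangle$, and using the backward Kolmogorov equation $\partial_t\Psi_t=\mathcal L_\theta\Psi_t$ to rewrite $\partial_t\nabla\Psi_{T-t}=-\nabla(\mathcal L_\theta\Psi_{T-t})$, the Hessian-in-$x$ terms and the $k\in\mathcal R_c$ drift-Jacobian terms cancel in pairs, leaving
\[
\E\big[\langle\nabla f(Z_\theta(T)),y_\theta(T)\rangle\big]=C-\sum_{k\in\mathcal R_d}\int_0^T\E\big[\langle\nabla\lambda_k(Z_\theta(t),\theta),y_\theta(t)\rangle\,\Delta_k\Psi_{T-t}(Z_\theta(t),\theta)\big]dt.
\]
Substituting this into $\hat S_\theta(f,T)=C+D$ and recalling $D_\theta\lambda_k=\partial_\theta\lambda_k+\langle\nabla\lambda_k,y_\theta\rangle$ from \eqref{defn_part_tod_der} yields exactly $\hat S^{(c)}_\theta(f,T)+\hat S^{(d)}_\theta(f,T)$. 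A purely pathwise variant, closer to the coupling machinery of Theorem \ref{thm:convergence}, is also available: couple $Z_\theta$ and $Z_{\theta+h}$ through the same Poisson processes $\{Y_k\}$ in \eqref{defn_pdmp_param}, so that between jumps the continuous coordinate moves with $\theta$ at rate $y_\theta$ while each internal clock $\int_0^t\lambda_k(x_\theta(s),U_\theta(s),\theta)\,ds$ moves at rate $\int_0^t D_\theta\lambda_k(x_\theta(s),U_\theta(s),\theta)\,ds$; expanding $f(Z_{\theta+h}(T))-f(Z_\theta(T))$ to first order in $h$ and invoking the Markov property (an extra firing of reaction $k\in\mathcal R_d$ near time $t$ contributes $\Delta_k\Psi_{T-t}$ in expectation) produces the two terms directly, with the \emph{full} derivative $D_\theta\lambda_k$ rather than $\partial_\theta\lambda_k$ appearing precisely because the continuous state is itself $\theta$-dependent.

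The technical core, and the main obstacle, is the regularity and integrability needed to make the above rigorous rather than formal. I would use Assumption \ref{assmp:key}(C) to confine the dynamics to a fixed compact set $C$ with probability tending to $1$; on $C$, parts (A)--(B) make the $\lambda_k$ and the derivatives $\nabla\lambda_k,\partial_\theta\lambda_k$ bounded and Lipschitz, which by Gronwall bounds $y_\theta$ uniformly on $[0,T]$, makes the expected number of discrete firings on $[0,T]$ finite (so the Duhamel expansion, the sums over $\mathcal R_d$, and Dynkin's formula all converge absolutely), and supplies the domination required to interchange $\partial_\theta$ with expectations and with the time-integral, and to show that the remainder in the $h$-expansion of the coupled paths is genuinely $o(h)$ uniformly. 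One must also show that $\Psi_t(\cdot,U,\cdot)$ is $C^1$ in $x$ and in $\theta$ uniformly in $t\in[0,T]$ — which follows from (A) together with smooth dependence of the between-jump ODE flow on initial data and on $\theta$ — and, for the Dynkin variant, that $\Psi_t$ is $C^2$ in $x$; since Assumption \ref{assmp:key}(A) only provides $f\in C^1$, I would obtain this by mollifying $f$, proving the identity for smooth $f$, and passing to the limit using the uniform bounds on $C$. Once these estimates are in place, what remains is the chain-rule bookkeeping sketched above.
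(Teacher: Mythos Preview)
Your proof is correct, and the core mechanism---Dynkin's formula on the augmented PDMP $(x_\theta,U_\theta,y_\theta)$---is the same one the paper uses. The route, however, differs in an interesting way. The paper first establishes a five-term expression for $\hat S_\theta(f,T)$ via an explicit split-coupling of $Z_\theta$ and $Z_{\theta+h}$ (their Proposition~\ref{prop:mainprop}), and then applies Dynkin to the \emph{time-independent} function $F(x,u,y)=\langle\nabla f(x,u),y\rangle$; the resulting three terms match the first three terms of the coupling formula, leaving exactly $\hat S^{(d)}_\theta$. You instead start from the Duhamel identity $P^{\theta'}_T-P^\theta_T=\int_0^T P^{\theta'}_{T-t}(\mathcal L_{\theta'}-\mathcal L_\theta)P^\theta_t\,dt$, which yields directly the two-term decomposition $C+D$ (this is the quantity the paper calls $\bar S_\theta(f,T)$ in Proposition~\ref{prop:convergence}), and then apply Dynkin to the \emph{time-dependent} function $\Phi(t,x,U,y)=\langle\nabla\Psi_{T-t}(x,U,\theta),y\rangle$, using the backward equation $\partial_t\Psi_t=\mathcal L_\theta\Psi_t$ to effect the Hessian and drift-Jacobian cancellations.

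What each buys: your Duhamel step is shorter and avoids the detailed stopping-time analysis of the coupling proof, at the price of needing $\Psi_t\in C^2_x$ (which you correctly propose to obtain by mollifying $f$); the paper's route keeps the Dynkin step elementary (only $\nabla f$ and its Hessian appear) but front-loads the work into the coupling proposition, which it needs anyway for Theorem~\ref{thm:convergence}. Your ``pathwise variant'' sketch is essentially the paper's Proposition~\ref{prop:mainprop} argument, so you have in fact outlined both approaches.
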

\begin{proof}
The proof is provided in the Appendix.
\end{proof}

Note that if there are no discrete reactions (i.e. $\mathcal{R}_d = \emptyset$) then $U_\theta(t)$ remains fixed to its initial state $U_0$, the PDMP $( Z_\theta(t) )_{t \geq 0 }$ essentially reduces to the deterministic process $( x_\theta(t) )_{t  \geq 0}$ and the $\theta$-derivative of the state $x_\theta(t)$ is simply $y_\theta(t)$ obtained by solving IVP \eqref{defn_y_thetat}. Consequently $ \hat{S}^{(d)}_\theta(f,T) =0$ and
\begin{align*}
\hat{S}_\theta(f,T) = \frac{\partial}{ \partial \theta}  f( x_\theta(T)  , U_0 ) =  \left \langle  \nabla f( x_\theta(T), U_0 ) , y_\theta(T)  \right \rangle =    \hat{S}^{(c)}_\theta(f,T),
\end{align*}
which is well-known for deterministic system. On the other extreme suppose that there are no continuous reactions (i.e. $\mathcal{R}_c = \emptyset$). In this case, $x_\theta(t) = x_\theta(0) = x_0$  and $y_\theta(t) = y_\theta(0) = {\bf 0}$ for all $t \geq 0$ and the PDMP $( Z_\theta(t) )_{t \geq 0 }$ essentially reduces to a CTMC $( U_\theta(t) )_{t  \geq 0}$. Hence $ \hat{S}^{(c)}_\theta(f,T) = 0$ and Theorem 3.1 in \cite{gupta2017estimation} shows that
\begin{align*}
\hat{S}_\theta(f,T) = \sum_{k \in \mathcal{R}_d } \E \left[ \int_0^T   \partial_{\theta}   \lambda_k (x_0 , U_\theta(t) ,\theta )    \Delta_k \Psi_{T- t}( x_0, U_\theta(t), \theta ) dt  \right] =  \hat{S}^{(d)}_\theta(f,T),
\end{align*}
which is consistent with Theorem \ref{thm:representation}. Using this representation, an unbiased estimator for $\hat{S}^{(d)}_\theta(f,T)$ can be constructed using simulations of the CTMC $( U_\theta(t) )_{t  \geq 0}$. This was done with the \emph{Integral Path Algorithm} (IPA) in \cite{gupta2017estimation} and it is straightforward to generalize IPA to provide an estimator for $\hat{S}^{(d)}_\theta(f,T)$ for a PDMP, even in the presence of continuous reactions. We shall employ this method to estimate $\hat{S}^{(d)}_\theta(f,T)$ in the numerical example considered in Section \ref{sec:example1}. However it is important to point out that any of the existing sensitivity estimation methods for reaction network described by CTMCs \cite{IRN,Gir,KSR1,KSR2,DA,Gupta,Gupta2} can be used to estimate $\hat{S}^{(d)}_\theta(f,T)$ by adapting these methods to handle time-varying propensities. To see this note that Theorem 3.1 in \cite{gupta2017estimation} allows us to express $\hat{S}^{(d)}_\theta(f,T)$ as
\begin{align}
\label{result_equiv}
\hat{S}^{(d)}_\theta(f,T) =  \left. \frac{ \partial }{ \partial \theta_0   } \E \left( f ( x_\theta  (T) , \hat{U}_{ \theta_0 } (T)  ) \right) \right\vert_{ \theta_0 = \theta },
\end{align}
where process $x_\theta$ is as in \eqref{defn_pdmp_param} but process $ \hat{U}_{ \theta_0 } $ is given by 
\begin{align*}
\hat{U}_{\theta_0}(t) & = U_0 + \sum_{k \in \mathcal{R}_d} Y_k \left( \int_{0}^t \hat{\lambda}_k( x_\theta(s), \hat{U}_{\theta_0}(s),\theta_0 ) ds  \right) \zeta^{(d)}_k,
\end{align*}
with $Y_k$-s being the \emph{same} unit rate Poisson processes as those in \eqref{defn_pdmp_param} and the propensity function $\hat{ \lambda}_k$ for each discrete reaction $k \in \mathcal{R}_d$ transformed according to
$$ \hat{\lambda}_k( x_\theta(s),u,\theta_0 ) :=  \lambda_k( x_\theta(s), u,\theta_0 ) + (\theta_0 - \theta) \left \langle \nabla \lambda_k( x_\theta(s), u ,\theta) , y_\theta(s) \right\rangle.$$
Such a transformation ensures that $ \hat{\lambda}_k$ matches the original propensity function $\lambda_k$ at $\theta_0 = \theta$ and
\begin{align*}
\left. \partial_{\theta_0} \hat{\lambda}_k( x_\theta(s),u,\theta_0 ) \right\vert_{ \theta_0 = \theta } = D_\theta \lambda_k(x_\theta(s) , u, \theta ).
\end{align*}• 
holds, which is required for the equivalence \eqref{result_equiv}.

As mentioned in Section \ref{sec:intro}, the main difficulty in proving Theorems \ref{thm:convergence} and \ref{thm:representation} arises due to interactions between discrete and continuous parts of the dynamics, through the propensity functions. These interactions create intricate $\theta$-dependencies which need to be carefully disentangled to show the convergence \eqref{sens_conv_result} and that PDMP sensitivity has the simple form \eqref{sens_pdmprep1}. For this we construct a coupling between the PDMP $Z_\theta = (x_\theta,U_\theta)$ and its perturbed version $Z_{\theta+h} = (x_{\theta +h},U_{\theta+h})$ and then carefully study how their difference evolves with time. See the Appendix for more details.

We remark here that computational savings that result from working with the PDMP model, instead of the original multiscale model, arise from treating as many reactions as possible \emph{continuously}. The separable form \eqref{sens_pdmprep1} of PDMP sensitivity in Theorem \ref{thm:representation} clearly demonstrates that sensitivity estimation also benefits from treating more reactions continuously, as computing the continuous sensitivity $ \hat{S}^{(c)}_\theta(f,T)$ is far easier than estimating the discrete sensitivity $\hat{S}^{(d)}_\theta(f,T)$.

We end this section with a simple algorithm that summarizes the steps needed to estimate parameter sensitivity for multiscale stochastic networks using PDMP approximations.
\begin{algorithm}[H]  
\caption{Work-flow for estimating parameter sensitivity for multiscale stochastic networks with PDMP model reductions.}      
 \label{algo_pdmp_sim_sens}
 \begin{algorithmic}[1]
\State Describe the multiscale model in terms of scaling constants $\alpha_i$-s and $\beta_k$-s (see Section \ref{sec:multiscalemodels}). Also pick an observation timescale $\gamma$.
\State If necessary, eliminate the fast reactions among discrete species by applying the quasi-stationary assumption (QSA) (see Remark \ref{rem:qssa}).
\State Derive the PDMP model reduction (see Section \ref{sec:pdmpconv}) and identify the limiting PDMP $Z_\theta = (x_\theta,U_\theta )$ \eqref{defn_pdmp_param}.
\State Simulate this PDMP (using Algorithm \ref{algo_pdmp_sim} for example) along with the process $y_\theta$ defined by IVP \eqref{defn_y_thetat}.
\State Estimate the PDMP sensitivity by separately estimating the contribution of the continuous part $ \hat{S}^{(c)}_\theta(f,T) $ and the contribution of the discrete part $ \hat{S}^{(d)}_\theta(f,T) $ (see Theorem \ref{thm:representation}). 
\end{algorithmic}
\end{algorithm}

\section{Computational examples} \label{sec:example}

In this section we illustrate our results with a couple of computational examples. In each of these examples we consider a multiscale network with CTMC dynamics, that permits a PDMP model reduction. We demonstrate that parameter sensitivities for the original multiscale CTMC dynamics can be reliably estimated from the reduced PDMP model and a small fraction of the computational costs. These costs are measured in terms of the \emph{central processing unit} (CPU) time, required for estimating the quantity of interest using an estimator implemented in C++ and executed on an Apple machine with 2.9 GHz Intel Core i5 processor.

\subsection{A multiscale gene-expression network} \label{sec:example1}

As our first example we consider the multiscale gene-expression network \cite{MO}. Here a gene ($G$) transcribes mRNA ($M$) molecules that then translate into the corresponding protein ($P$) molecules (see Figure \ref{figure:gefig}{\bf A}). Both $M$ and $P$ molecules undergo first-order degradation at certain rates, and we assume that there is a negative transcriptional feedback from the protein molecules which is modeled as a Hill-type propensity function. The gene $G$ can switch between active ($G_{ \textnormal{on} }$) and inactive ($G_{ \textnormal{off} }$) states and transcription is only possible in the active state $G_{ \textnormal{on} }$. The reactions for this network are given in Table \ref{rxnchart:geneexnetwork}.

\begin{table}[h!]
\begin{align*}
\begin{array}{|c|c|c|} \hline
\textnormal{Reaction No.} &  \textnormal{Reaction} & \textnormal{Propensity  } \lambda'_k(x_1,x_2,x_3) \\ \hline
{\bf 1}& G_{ \textnormal{off} }  \longrightarrow G_{ \textnormal{on} }& N^{2/3}_0(1 - x_3)\\
{\bf 2}& G_{ \textnormal{on}}  \longrightarrow G_{ \textnormal{off}}& N^{2/3}_0 x_3\\
3& G_{ \textnormal{on}}  \longrightarrow G_{ \textnormal{on}} + M& \frac{20 N_0 x_3}{N_0 +  \theta_1 x_2}\\
4& M \longrightarrow M + P&  N_0 \theta_2 x_1\\
5& M  \longrightarrow \emptyset & \theta_3 x_1\\
6& P \longrightarrow \emptyset&  \theta_4 x_2\\ \hline
\end{array}
\end{align*}
\caption{Reactions for the multiscale gene-expression network. The associated propensity functions $\lambda'_k$-s are also provided. Here $x_1$, $x_2$ and $x_3$ denote the number of mRNA counts, protein counts and ON gene-copies respectively.  Note that $x_3 \in \{0,1\}$. We set $N_0 =1000$ and select parameters as $\theta_1 =1 ,\theta_2 = 0.01, \theta_3 = 1$ and $\theta_4 = 0.1$. Reactions $1$ and $2$ are much \emph{faster} in comparison to the other reactions.
}
\label{rxnchart:geneexnetwork}
\end{table}

Henceforth the species are ordered as ${\bf S}_1 = M$, ${\bf S}_2 = P$ and ${\bf S}_3 = G_{ \textnormal{on} }$. Note that the copy-number of $G_{ \textnormal{on} }$ is either $0$ or $1$. We suppose that proteins are abundant but mRNAs are present in small numbers. Therefore we choose the species abundance factors as $\alpha_1 = 0$, $\alpha_2 = 1$ and $\alpha_3 = 0$ (see Section \ref{sec:multiscalemodels}). We set $\beta_k = 0$ for reactions $k=3,4,5,6$ and $\beta_k = 2/3$ for reactions $k =1,2$. The observation timescale is selected as $\gamma = 0$ and the initial state is $(0,0,1)$.

It is not possible to apply the PDMP model reduction, as described in Section \ref{sec:pdmpconv}, directly on this multiscale network due to the presence of fast reactions (reactions 1 and 2) among discrete species. However we can first apply the quasi-stationary assumption (QSA) to eliminate these reactions (see Remark \ref{rem:qssa}) and then apply the PDMP model reduction. In this example, application of QSA is simple as the stationary distribution for the copy-numbers of species $ G_{ \textnormal{on} }$ is just Bernoulli with probability $p_{ \textnormal{eq} } = 1/2$. After applying QSA and the PDMP model reduction we arrive at a PDMP which approximately captures the original multiscale stochastic dynamics under species scalings specified by $\alpha_i$-s. This PDMP is described in Table \ref{rxnchart:pdmp_geneexnetwork}.
\begin{table}[h!]
\begin{align*}
\begin{array}{|c|c|c|c|} \hline
\textnormal{Reaction No.} & \textnormal{Reaction type} &  \textnormal{Reaction} & \textnormal{Propensity  } \lambda_k(z_1,z_2) \\ \hline
3&  \textnormal{discrete} & \emptyset  \longrightarrow  M& \frac{20 p_{ \textnormal{eq} } }{1 + \theta_1 z_2}\\
4& \textnormal{continuous}&M \longrightarrow M + P&  \theta_2 z_1\\
5& \textnormal{discrete} &M  \longrightarrow \emptyset & \theta_3  z_1\\
6&\textnormal{continuous} &P \longrightarrow \emptyset&  \theta_4 z_2 \\ \hline
\end{array}
\end{align*}
\caption{PDMP model reduction for the multiscale gene-expression network. This PDMP has one discrete species $\mathcal{S}_d = \{M\}$, one continuous species $\mathcal{S}_c = \{P\}$, two discrete reactions $\mathcal{R}_d =\{3,5\}$ and two continuous reactions $\mathcal{R}_c = \{4,6\}$. The associated propensity functions $\lambda_k$-s are provided. The parameters ($\theta_i$-s) are the same as in Table \ref{rxnchart:geneexnetwork}. Here $z_1$ is the copy-number of mRNA ($M$) and $z_2$ is the concentration of protein ($P$). 
}
\label{rxnchart:pdmp_geneexnetwork}
\end{table}

We simulate $10^5$ trajectories of this PDMP with Algorithm \ref{algo_pdmp_sim} in the time-period $[0,T]$ with $T = 50$, and for comparison we also simulate $10^5$ trajectories of the CTMC describing the original multiscale reaction network with Gillespie's SSA \cite{GP}. As we observe in Figures \ref{figure:gefig}{\bf C} and \ref{figure:gefig}{\bf D}, the match between the two approaches is quite good, which demonstrates that the limiting PDMP captures the original dynamics very accurately. However, as expected, the computational costs for PDMP simulation is less than $20\%$ of the cost of SSA simulation (see Figure \ref{figure:gefig}{\bf B}).

\begin{figure}[ht!]
\centering
\frame{\includegraphics[width=0.98\textwidth]{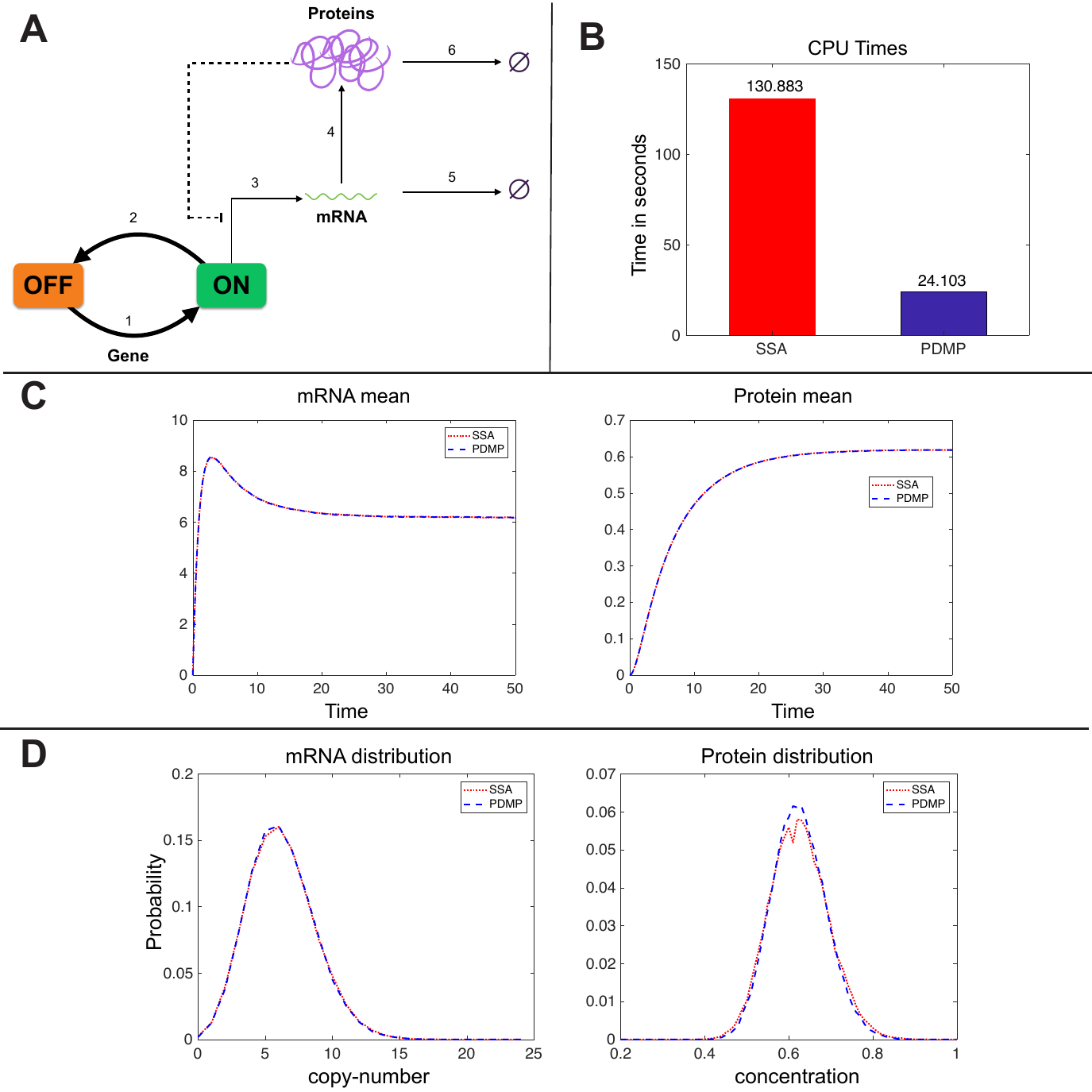}}
\caption{Panel {\bf A} is a cartoon of the multiscale gene-expression network with transcriptional feedback from the protein molecules. The thick arrows indicate the fast reactions among discrete species that can be eliminated with QSA application. Panel {\bf B} compares the computational costs for simulating $10^5$ trajectories for the original multiscale CTMC dynamics with Gillespie's SSA and for the reduced PDMP dynamics with Algorithm \ref{algo_pdmp_sim}. Panel {\bf C} provides a comparison between SSA and PDMP, for the mean dynamics of mRNAs and proteins in the time-period $[0,50]$ and panel {\bf D} provides the same comparison for the two marginal distributions at time $T = 50$. }
\label{figure:gefig} 
\end{figure}

Next we compute sensitivities for the expected protein concentration at time $T = 50$, w.r.t. model parameters $\theta_1,\dots,\theta_4$ whose values are as in Table \ref{rxnchart:geneexnetwork}. For this purpose we use the \emph{Integral Path Algorithm} (IPA) \cite{gupta2017estimation} with $10^4$ samples, obtained by simulation of the original multiscale dynamics with Gillespie's SSA and the PDMP dynamics with Algorithm \ref{algo_pdmp_sim}. Note that for PDMP dynamics, IPA is only used to estimate the sensitivity  contribution of the discrete part $\hat{S}^{(d)}_\theta(f,T) $ (see Theorem \ref{thm:representation}). The results of this sensitivity analysis are shown in Figure \ref{figure:ge_sensitivity_values}. While both the estimation approaches SSA-IPA (i.e.\ IPA with SSA simulations) and PDMP-IPA (i.e.\ IPA with PDMP simulations) provided very similar sensitivity values, the PDMP-based approach is around 10 times faster.

\begin{figure}[ht!]
\centering
\includegraphics[width=0.98\textwidth]{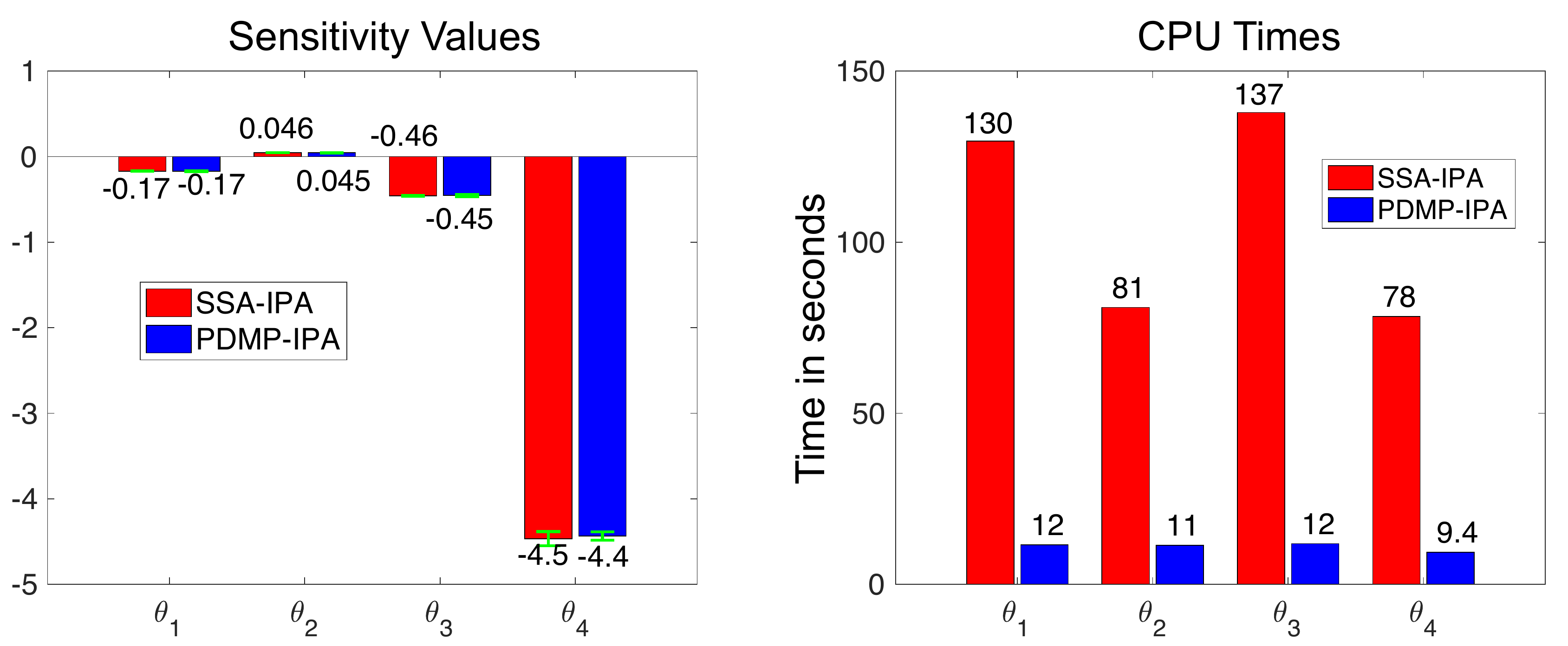}
\caption{Sensitivity analysis for the multiscale gene-expression network. The left bar-chart depicts the estimated sensitivities for the expected protein concentration at time $T = 50$, w.r.t. model parameters $\theta_1,\dots,\theta_4$. These values are estimated with SSA-IPA and PDMP-IPA using $10^4$ samples. The \emph{error-bars} signify the estimator standard deviation. The right bar-chart provides a comparison of the computational costs associated with sensitivity estimation w.r.t.\ all the parameters, with both the approaches SSA-IPA and PDMP-IPA.   
}
\label{figure:ge_sensitivity_values} 
\end{figure}

\subsection{Michaelis-Menten Enzyme Kinetics} \label{secex:mm}

We now consider the standard Michaelis-Menten enzyme kinetics model \cite{michaelis2007kinetik}, where an \emph{enzyme} ($E$) catalyzes the conversion of \emph{substrate} ($S$) into \emph{product} ($P$), by forming an intermediate \emph{complex} ($ES$) via reversible binding between the enzyme and the substrate. This model can be schematically represented as  
\begin{align}
\label{scheme_mm}
E +S \xrightleftharpoons[ \kappa_2]{ \kappa_1} ES \stackrel{\kappa_3}{\longrightarrow} E + P.
\end{align}
Each of the three reactions follow mass-action kinetics \eqref{defn:massactionkinetics} and $\kappa_i$ is the rate constant for reaction $i$. We order the species as ${\bf S}_1 = S$, ${\bf S}_2 = P$, ${\bf S}_3 = E$ and ${\bf S}_4 = ES$. We suppose that substrate/product molecules are in high abundance while enzyme is only present in small copy-numbers bounded by a positive integer $M$. This ensures that copy-numbers of complex $ES$ are also bounded by $M$ due to the conservation relation
\begin{align*}
x_3 + x_4  = M
\end{align*}
where $x_3$ and $x_4$ denote the copy-numbers of $E$ and $ES$ respectively. We choose the species abundance factors as $\alpha_1 =\alpha_2 =1$ and $\alpha_3 = \alpha_4 = 0$ (see Section \ref{sec:multiscalemodels}). Moreover we set $\beta_1 = 0$ for reaction $1$ and $\beta_i=1$ for reactions $i=2,3$ allowing us to express the rate constants $\kappa_i$-s in terms of normalized parameters $\theta_i$-s and the volume scaling parameter $N_0$ as
\begin{align*}
\kappa_1 = \theta_1, \quad \kappa_2 = N_0 \theta_2 \quad \textnormal{and} \quad \kappa_3 = N_0 \theta_3. 
\end{align*}
The observation timescale is selected as $\gamma = 0$ and the initial state is chosen to be $(0.5 N_0,0,M,0)$.

Under the scaling we just specified, one can apply the quasi-stationary assumption (QSA) (see Remark \ref{rem:qssa}) to eliminate the \emph{fast} reversible binding-unbinding reactions
\begin{align}
\label{MMQSANetwork}
E +S \xrightleftharpoons[ N_0 \theta_2]{ N_0 \theta_1} ES 
\end{align}
and obtain the reduced network which simply consists of a single conversion reaction
\begin{align}
\label{MMsimplnetwork}
S \stackrel{ \lambda_\theta(x_1)}{\longrightarrow}  P
\end{align}
where $\theta = (\theta_1,\theta_2,\theta_3)$ is the parameter vector, $x_1$ is the substrate \emph{concentration} and $\lambda_\theta(x_1)$ is the Michaelis-Menten rate function given by
\begin{align*}
\lambda_\theta(x_1) = \frac{M \theta_1 \theta_3 x_1 }{ \theta_2 + \theta_3 + \theta_1 x_1 }.
\end{align*}
To apply QSA on subnetwork \eqref{MMQSANetwork} one notes that for a fixed substrate concentration $x_1$, the stationary distribution for enzyme copy-numbers is binomial with parameters $M$ and
\begin{align*}
p = \frac{\theta_2 + \theta_3}{\theta_2 + \theta_3 + \theta_1 x_1}.
\end{align*}
The ODE dynamics $(x_1(t) , x_2(t) )_{t \geq 0}$ for the reduced network \eqref{MMsimplnetwork} is given by
\begin{align}
\label{mmreducedodedynamics}
\frac{dx_1}{dt} = -  \lambda_\theta(x_1) \quad \textnormal{and} \quad \frac{dx_2}{dt} =   \lambda_\theta(x_1)
\end{align}
where $x_1(t)$ and $x_2(t)$ denote the concentrations at time $t$ of substrate and protein respectively. For more details on this Michaelis-Menten model reduction we refer the readers to \cite{darden1979pseudo} or Section 6.4 in \cite{HWKang}.

\begin{figure}[ht!]
\centering
\includegraphics[width=0.98\textwidth]{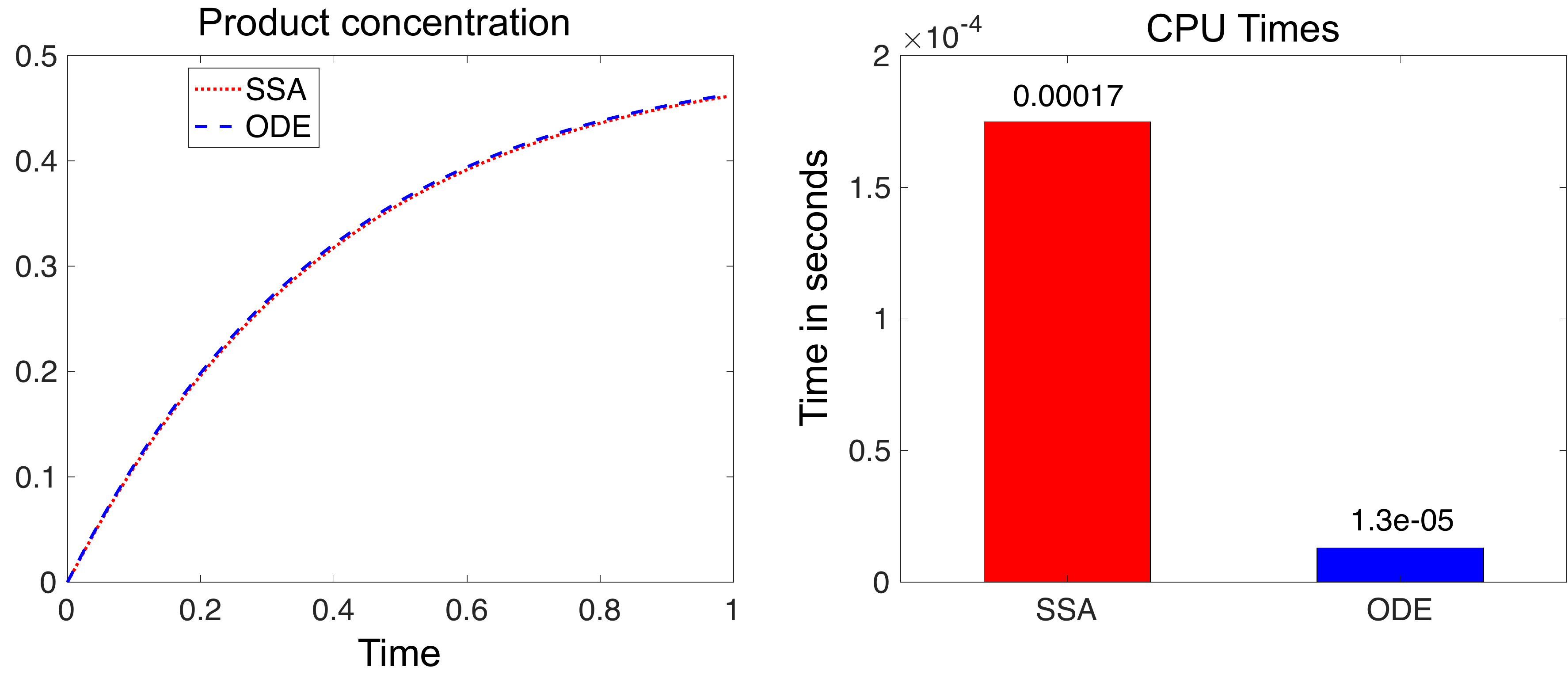}
\caption{ODE approximation of the multiscale Michaelis-Menten network kinetics. The figure on the left provides a comparison between SSA and ODE, for the mean dynamics of Product concentration in the time-period $[0,1]$. The mean dynamics is computed with $10^4$ SSA trajectories for the original model and with a single trajectory for the reduced ODE model. The figure on the right compares the computational costs for simulating a single trajectory for the original dynamics with SSA and for the reduced ODE dynamics with Algorithm \ref{algo_pdmp_sim}.}
\label{figure:MM1} 
\end{figure}

From now on we set the model parameters as $N_0 =1000$, $M =20$, $\theta_1 = 0.3$, $\theta_2 = 1$ and $\theta_3 = 0.8$. In Figure \ref{figure:MM1} we demonstrate the accuracy of the reduced ODE model in capturing the mean dynamics of Product concentration for the multiscale Michaelis-Menten network \eqref{scheme_mm}. The ODE dynamics \eqref{mmreducedodedynamics} is simulated with Algorithm \ref{algo_pdmp_sim}, and since there are no discrete reactions, this algorithm simply becomes the standard ODE-solver based on an explicit Euler scheme. It can be seen from Figure \ref{figure:MM1} that the computational costs for ODE simulation are only about $7.65\%$ of the computational costs for SSA simulations of the original multiscale stochastic model.

We now compute the sensitivities for the expected product concentration at time $T=1$ w.r.t. parameters $\theta_1,\theta_2$ and $\theta_3$. For the original multiscale CTMC dynamics we compute sensitivities using IPA with $10^4$ trajectories obtained with Gillespie's SSA. For the limiting ODE dynamics \eqref{mmreducedodedynamics}, the sensitivities can simply be computed by solving the ODE \eqref{defn_y_thetat} and evaluating the sensitivity contribution $ \hat{S}^{(c)}_\theta(f,T) $ of the continuous dynamics (see Theorem \ref{thm:representation}). The results of this sensitivity analysis are shown in Figure \ref{figure:MM2}. While both the estimation approaches provided very similar sensitivity values, the ODE-based approach is around 12 times faster.

\begin{figure}[ht!]
\centering
\includegraphics[width=0.98\textwidth]{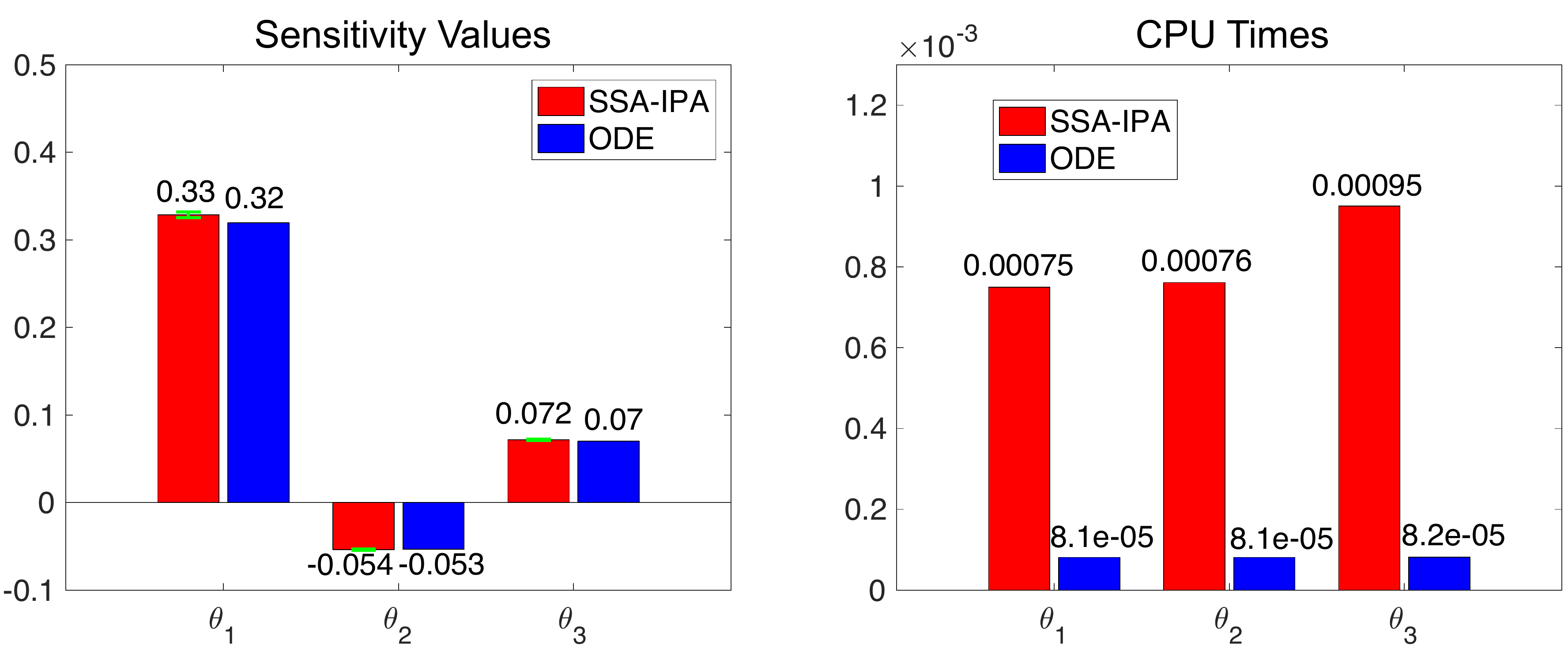}
\caption{Sensitivity analysis for the Michaelis-Menten network kinetics. The left bar-chart depicts the estimated sensitivities for the mean product concentration at time $T = 1$, w.r.t. model parameters $\theta_1,\theta_2$ and $\theta_3$. These values are estimated for the original CTMC dynamics with SSA-IPA using $10^4$ samples and the error bar signifying the estimator standard deviation is also shown. For the limiting ODE model, the sensitvities are simply computed by solving the ODE \eqref{defn_y_thetat}. The right bar-chart provides a comparison of the computational costs associated with generating each sensitivity sample, with both the approaches.
}
\label{figure:MM2} 
\end{figure}

\section{Conclusion} \label{sec:concl}

Sensitivity analysis of stochastic models of multiscale reaction networks is extremely difficult because existing sensitivity estimation methods generally require simulations of the stochastic dynamics which is prohibitively expensive for multiscale networks. Recently it has been shown that often for multiscale networks, the complexity can be reduced by approximating the dynamics as a \emph{Piecewise Deterministic Markov process} (PDMP), which is a hybrid process consisting of both discrete and continuous components \cite{crudu2009hybrid,HWKang}. Our aim in this paper is to demonstrate that such PDMP approximations can provide accurate estimates of the parameter sensitivity for the original multiscale model at a fraction of the computational costs. To this end we prove two main results. The first result (Theorem \ref{thm:convergence}) proves that in the limit where the PDMP approximation becomes exact, the original sensitivity converges to the corresponding PDMP sensitivity. The second result (Theorem \ref{thm:representation}) provides a representation of the PDMP parameter sensitivity that separates the contributions of discrete and continuous components in the dynamics. We discuss how each of these contributions can be efficiently estimated.

Observe that PDMP convergence of the dynamics of the stochastic model of a multiscale reaction network, is essentially a generalization of the classical thermodynamic limit result of Kurtz \cite{kurtz1978strong}, which stipulates convergence of the stochastic model to the ODE-based deterministic model of the reaction network, under a suitable scaling of reaction-rate constants, when all the species are uniformly abundant, i.e. all species are of order $N$ or $O(N)$. In such a setting, our results in this paper show that along with the dynamics, the sensitivities w.r.t.\ various model parameters will also converge to the corresponding sensitivities for the limiting deterministic model (see the example in Section \ref{secex:mm}). Even though this paper is motivated by applications in Systems Biology, estimation of PDMP parameter sensitivities via Theorem \ref{thm:representation} could be useful in other scientific areas such as Reliability Theory \cite{Eymard2011}.

The approach in this paper would benefit significantly if the convergence analysis of Theorem \ref{thm:convergence} can be extended to obtain concrete estimates of the approximation error using ideas similar to those in \cite{ganguly2015jump}. Another promising future research direction would be to integrate the results in this paper with approaches that automate PDMP model reductions for multiscale reaction networks \cite{hepp2015adaptive}. It would also be beneficial to extend the results in this paper to allow for diffusion in the continuous component of the dynamics i.e.\ between the jumps of the discrete component, the continuous component evolves like a Stochastic Differential Equation (SDE) rather than an ODE. It has been shown that such jump-diffusion models are often more accurate than PDMP models in capturing the stochastic dynamics of multiscale models \cite{crudu2009hybrid}.

\section*{Appendix} 

In this appendix we prove the main results of the paper which are Theorems \ref{thm:convergence} and \ref{thm:representation}. Throughout this section we use the same notation as in Section \ref{sec:mainresults}. In particular, $\Psi_{t}(x,U,\theta)$ is defined by \eqref{defn_psi} and under Assumption \ref{assmp:key}, this function is differentiable w.r.t.\ $x$. We start this section with a simple proposition.
\begin{proposition}
\label{prop:convergence}
Let the multiscale process $( Z^N_\theta(t) )_{t  \geq 0}$ and the PDMP $( Z_\theta(t) )_{t  \geq 0}$ be as in Section \ref{sec:mainresults}. Suppose that Assumption \ref{assmp:key} holds and $Z^N_\theta \Rightarrow Z_\theta$ as $N  \to \infty$. Then
\begin{align*}
\lim_{N  \to \infty}  \frac{ \partial }{ \partial \theta} \E( f( Z^{N}_\theta(  T ) ) )  = \bar{S}_\theta(f,T)
\end{align*}
where
\begin{align}
\label{defn_barstheta}
\bar{S}_\theta(f,T) &=  \sum_{k \in  \mathcal{R}_c}   \E\left[ \int_{0}^T   \partial_\theta \lambda_k   (x_\theta(t) ,U_\theta(t) ,\theta ) \left\langle  \nabla \Psi_{T-t}(x_\theta(t),U_\theta(t),T-t) , \zeta^{(c)}_k  \right\rangle dt \right] \\
& +  \sum_{k \in \mathcal{R}_d} \E  \left[ \int_{0}^T   \partial_\theta \lambda_k   (x_\theta(t) ,U_\theta(t) ,\theta ) \Delta_{k} \Psi_{T-t}(x_\theta(t),U_\theta(t),T-t)  dt \right]. \notag
\end{align}
\end{proposition}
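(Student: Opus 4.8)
The plan is to differentiate the expectation $\E(f(Z^N_\theta(T)))$ by coupling the PDMP-like dynamics at parameters $\theta$ and $\theta+h$, letting $h\to 0$ first, and then letting $N\to\infty$; the key step is to show these two limits may be interchanged. First I would set up the Kurtz random time-change representation \eqref{scaledprocess:rtc2} for the scaled process $Z^{N,r}_\theta$ and construct a coupling of $Z^{N,r}_\theta$ and $Z^{N,r}_{\theta+h}$ driven by the \emph{same} family of Poisson processes $\{Y_k\}$. Using the Markov property and the function $\Psi^N_t$ (the analogue of $\Psi_t$ for the prelimit process), I would write $\E(f(Z^N_{\theta+h}(T)))-\E(f(Z^N_\theta(T)))$ as a telescoping sum/integral over the jump and flow contributions where the propensities differ, obtaining a prelimit version of the Bismut--Elworthy or ``Girsanov-type'' identity: the $\theta$-derivative of $\E(f(Z^N_\theta(T)))$ equals a sum over reactions $k$ of $\E[\int_0^T \partial_\theta\lambda^N_k(Z^N_\theta(t),\theta)\,(\text{weight}_k)\,dt]$, where $\text{weight}_k = \langle\nabla\Psi^N_{T-t},\Lambda_N\zeta_k\rangle$-type terms for continuous reactions and $\Delta_k\Psi^N_{T-t}$-type terms for discrete reactions. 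This step uses the standard integral/path representation of sensitivity for pure-jump CTMCs (as in \cite{gupta2017estimation}) applied to the finite-$N$ model, whose propensities are all well-defined and smooth in $\theta$ by Assumption~\ref{assmp:key}(B).

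Next I would pass to the limit $N\to\infty$ in this prelimit sensitivity formula. The ingredients are: (i) $Z^N_\theta\Rightarrow Z_\theta$, hence by Skorohod representation we may assume almost sure convergence in the Skorohod topology; (ii) Assumption~\ref{assmp:key}(B), which gives uniform-on-compacts convergence of $\lambda^N_k$, $\nabla\lambda^N_k$ and $\partial_\theta\lambda^N_k$ to their limits, so that $\partial_\theta\lambda^N_k(Z^N_\theta(t),\theta)\to\partial_\theta\lambda_k(Z_\theta(t),\theta)$; (iii) Assumption~\ref{assmp:key}(C), which confines the dynamics to a fixed compact set $C$ with high probability uniformly in $N$, delivering the uniform integrability needed for bounded/dominated convergence and also keeping us away from the regime where the prelimit and limit propensities diverge; and (iv) continuity of the maps $(x,U,\theta)\mapsto\Psi_t(x,U,\theta)$ and $(x,U,\theta)\mapsto\nabla\Psi_t(x,U,\theta)$, together with convergence $\Psi^N_t\to\Psi_t$, which follows again from $Z^N\Rightarrow Z$ and the definition \eqref{defn_psi}. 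Combining these, each term $\E[\int_0^T\partial_\theta\lambda^N_k(Z^N_\theta(t),\theta)(\text{weight}^N_k)\,dt]$ converges to the corresponding term with the limiting propensities and the PDMP process $Z_\theta=(x_\theta,U_\theta)$, and one checks that for continuous reactions $k\in\mathcal{R}_c$ the weight collapses to $\langle\nabla\Psi_{T-t}(x_\theta(t),U_\theta(t),T-t),\zeta^{(c)}_k\rangle$ while for discrete reactions $k\in\mathcal{R}_d$ it collapses to $\Delta_k\Psi_{T-t}(x_\theta(t),U_\theta(t),T-t)$ --- exactly the right-hand side \eqref{defn_barstheta}. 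One also has to note that reactions $k\notin\mathcal{R}_c\cup\mathcal{R}_d$ (those with $r+\rho_k<0$) contribute terms that vanish in the limit because their scaled rates tend to $0$, or that their effect $\Lambda_N\zeta_k\to0$; this is where the truncation $\hat\zeta_k$ enters.

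The main obstacle I anticipate is \textbf{justifying that $\partial_\theta$ and $\lim_{N\to\infty}$ commute}, i.e.\ that $\lim_{N}\partial_\theta\E(f(Z^N_\theta(T)))$ equals $\partial_\theta\lim_N\E(f(Z^N_\theta(T)))$ --- or rather, avoiding this issue by never differentiating the limit directly and instead establishing the prelimit identity and then passing to the limit inside it. Concretely, the delicate point is uniform control, in $N$, of the derivative processes: the finite-difference quotients $(Z^N_{\theta+h}-Z^N_\theta)/h$ must be shown to be bounded in an appropriate sense uniformly over $N$ and over $h$ in a neighborhood of $0$, so that one can interchange $h\to0$ with expectation and with $N\to\infty$. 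This requires a Grönwall-type estimate on the coupled difference process, using that on the compact set $C$ the propensities and their gradients are Lipschitz (from Assumption~\ref{assmp:key}(B) plus continuity), and careful handling of the discrete jumps, where the coupling of $Y_k$ means the two copies may differ by a jump over a short random interval whose length is controlled by the propensity difference. Once the prelimit sensitivity identity is in hand with uniform-in-$N$ bounds on all integrands (via confinement to $C$ and the uniform convergence of propensities and their derivatives), the limit passage is a routine application of dominated convergence, and the proposition follows.
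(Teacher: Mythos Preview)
Your approach is essentially the same as the paper's: invoke the integral sensitivity representation for the finite-$N$ CTMC from \cite{gupta2017estimation} (Theorem~3.1 there) to write $\partial_\theta\E(f(Z^N_\theta(T)))$ as a sum over reactions of $\E[\int_0^T\partial_\theta\lambda^N_k\cdot(\text{weight}^N_k)\,dt]$, then pass $N\to\infty$ term by term using $Z^N_\theta\Rightarrow Z_\theta$, uniform convergence of $\Psi^N_t\to\Psi_t$ and $\nabla\Psi^N_t\to\nabla\Psi_t$ on compacts, and Assumption~\ref{assmp:key}(C) for domination. The ``main obstacle'' you anticipate is not actually present: because Theorem~3.1 of \cite{gupta2017estimation} already delivers the $\theta$-derivative at each fixed $N$ as an explicit integral formula, you never need uniform-in-$N$ control of the finite-difference quotients $(Z^N_{\theta+h}-Z^N_\theta)/h$ or a Gr\"onwall argument on the coupled difference process---the only limit to take is $N\to\infty$ inside an expectation of bounded integrands, which is handled by dominated convergence exactly as you describe in items (i)--(iv).
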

\begin{proof}
Let 
\begin{align*}
S^N_\theta(f,T) =   \frac{ \partial }{ \partial \theta} \E( f( Z^{N}_\theta(  T ) ) ) 
\end{align*}
and analogous to $\Psi_t$ \eqref{defn_psi} define the map $\Psi^N_t$ as
\begin{align*}
\Psi^N_t (x,U,\theta) = \E( f( Z^N_\theta (t) ) ), \quad \textnormal{for any} \quad t \geq 0,
\end{align*}
where $Z^N$ is the scaled process describing the multiscale reaction dynamics with $(x,U)$ as its initial state. Due to Theorem 3.1 in \cite{gupta2017estimation} we obtain
\begin{align*}
& S^N_\theta(f,T)  \\
&= \sum_{ k =1}^K \E \left( N^{\rho_k + r } \int_0^T \partial_\theta  \lambda^N_k( Z^N_\theta(t) ,\theta ) (  \Psi^N_{T-t}( Z^N_\theta(t)+ \zeta^N_k,\theta )  - \Psi^N_{T-t}( Z^N_\theta(t) , \theta  ) ) dt   \right),
\end{align*}
where $\zeta^N_k:= \Lambda_N \zeta_k$, $\rho_k = \beta_k + \langle \nu_k,  \alpha \rangle$ and $r$ is the timescale of observation \eqref{pdmp_conv_timescale}. We can write $Z^N_\theta(t) = (x^N_\theta(t), U^N_\theta(t) )$ where $x^N_\theta(t) \in \R^{S_c}$ denotes the states of species in $\mathcal{S}_c$ and  $U^N_\theta(t) \in \N_0^{S_d}$ denotes the states of species in $\mathcal{S}_d$. Exploiting the analysis in Section \ref{sec:pdmpconv} we can express $S^N_\theta(f,T) $ as
\begin{align*}
S^N_\theta(f,T) = S^{N,c}_\theta(f,T)  + S^{N,d}_\theta(f,T) +o(1)
\end{align*}
where the $o(1)$ term converges to $0$ as $N  \to \infty$,
\begin{align*}
S^{N,c}_\theta(f,T) & = \sum_{  k \in \mathcal{R}_c} \E \left(  \int_0^T \partial_\theta  \lambda_k( x^N_\theta(t), U^N_\theta(t) ,\theta ) \right. \\ &   \left. N^{\rho_k + r } (  \Psi^N_{T-t}( x^N_\theta(t)+N^{-(\rho_k + r) } \zeta^{(c)}_k , U^N_\theta(t) , \theta )  - \Psi^N_{T-t}( x^N_\theta(t), U^N_\theta(t) ,\theta  ) ) dt   \right) 
\end{align*}
and
\begin{align*}
S^{N,d}_\theta(f,T) &=  \sum_{  k \in \mathcal{R}_d}\E \left(  \int_0^T \partial_\theta  \lambda_k( x^N_\theta(t), U^N_\theta(t)  ,\theta )  \right. \\ & \qquad \left.  (  \Psi^N_{T-t}( x^N_\theta(t), U^N_\theta(t)+ \zeta^{(d)}_k,\theta )  - \Psi^N_{T-t}( x^N_\theta(t), U^N_\theta(t) , \theta  ) ) dt   \right).
\end{align*}

We know that as $N  \to \infty$, process $(x^N_\theta, U^N_\theta)$ converges in distribution to process $(x_\theta, U_\theta)$ in the Skorohod topology on $\R^{S_c} \times \N^{S_d}_0$. This ensures that for any $(x,U)$ and $t \geq 0$, $\Psi^N_t(x,U,\theta) \to \Psi_t(x,U,\theta)$ as $N  \to \infty$, and this convergence holds uniformly over compact sets, i.e.
\begin{align}
\label{compactconv1}
\lim_{N  \to \infty} \sup_{ (x,U) \in C , t \in [0,T]  } \left| \Psi^N_t(x,U,\theta) - \Psi_t(x,U,\theta)   \right| = 0
\end{align}
for any $T >0$ and any compact set $C \subset \R^{S_c} \times \N^{S_d}_0$. In fact, under Assumptions \ref{assmp:key} we also have
\begin{align}
\label{compactconv2}
\lim_{N  \to \infty} \sup_{ (x,U) \in C , t \in [0,T]  } \left\|  \nabla \Psi^N_t(x,U,\theta) -  \nabla \Psi_t(x,U,\theta)   \right\| = 0.
\end{align}

As $(x^N_\theta, U^N_\theta)  \Rightarrow (x_\theta, U_\theta)$, using \eqref{compactconv1} it is straightforward to conclude that
\begin{align}
\label{propstart_firstlim}
\lim_{N \to \infty} S^{N,d}_\theta(f,T)    = \sum_{k \in \mathcal{R}_d} \E  \left[ \int_{0}^T   \partial_\theta \lambda_k   (x_\theta(t) ,U_\theta(t) ,\theta ) \Delta_{k} \Psi_{T-t}(x_\theta(t),U_\theta(t),T-t)  dt \right].
\end{align}
Noting that 
\begin{align*}
& N^{\rho_k + r } \left[  \Psi^N_{T-t}\left( x^N_\theta(t)+ \frac{1}{ N^{\rho_k + r } }\zeta^{(c)}_k , U^N_\theta(t) , \theta \right)  - \Psi^N_{T-t}\left( x^N_\theta(t), U^N_\theta(t) ,\theta  \right) \right] \\
& =  \left\langle \nabla \Psi^N_{T-t}\left( x^N_\theta(t), U^N_\theta(t) ,\theta  \right), \zeta^{(c)}_k \right\rangle +o(1),
\end{align*}
\eqref{compactconv2} allows us to obtain
\begin{align*}
& \lim_{N \to \infty} S^{N,c}_\theta(f,T) \\
&=\sum_{k \in  \mathcal{R}_c}   \E\left[ \int_{0}^T   \partial_\theta \lambda_k   (x_\theta(t) ,U_\theta(t) ,\theta ) \left\langle  \nabla \Psi_{T-t}(x_\theta(t),U_\theta(t),T-t) , \zeta^{(c)}_k  \right\rangle dt \right]. 
\end{align*}
This relation along with \eqref{propstart_firstlim} proves the proposition.
\end{proof}

In light of Proposition \ref{prop:convergence}, to prove Theorem \ref{thm:convergence} it suffices to show that $\bar{S}_\theta(f,T)  = \hat{S}_\theta(f,T) $, where $\hat{S}_\theta(f,T) $ is the sensitivity for limiting PDMP $(Z_\theta(t))_{t \geq 0 }$ defined by
\begin{align}
\label{defn_pdmp_sensitivity}
\hat{S}_\theta(f,T) &= \lim_{ h \to \infty} \frac{ \E( f(Z_{\theta+h} (T) ) )  - \E( f(Z_{\theta} (T) ) ) }{h} \\
&= \lim_{ h \to \infty} \frac{ \E( f(x_{\theta+h} (T),  U_{\theta+h} (T) ) )  - \E( f(x_{\theta} (T),  U_{\theta} (T) )  ) }{h}. \notag
\end{align}
The next proposition derives a formula for $\hat{S}_\theta(f,T) $ by coupling processes $Z_\theta = (x_\theta, U_\theta)$ and $Z_{\theta +h} = (x_{\theta +h}, U_{\theta +h} )$. This formula will be useful later in proving both Theorems \ref{thm:convergence} and \ref{thm:representation}.

\begin{proposition}
\label{prop:mainprop}
Let $y_\theta(t)$ be the solution of IVP \eqref{defn_y_thetat} and let $D_\theta \lambda_k(  x_\theta(t), U_\theta(t) ,\theta)$ be given by \eqref{defn_part_tod_der}. Then the PDMP sensitivity $\hat{S}_\theta(f,T) $ defined by \eqref{defn_pdmp_sensitivity} can be expressed as 
\label{prop:sensrep1}
\begin{align*}
\hat{S}_\theta(f,T)  &= \sum_{k \in \mathcal{R}_c} \E\left[  \int_{0}^{T} \partial_{\theta} \lambda_k (x_\theta(t) ,U_\theta(t) ,\theta ) \left\langle  \nabla  f(x_\theta(t),U_\theta(t) ) , \zeta^{(c)}_k  \right\rangle dt   \right] \\
&+ \sum_{k \in \mathcal{R}_c} \E\left[  \int_{0}^{T}  \left \langle  \nabla \left[   \lambda_k (x_\theta(t) ,U_\theta(t) ,\theta ) \left\langle f(x_\theta(t),U_\theta(t) ) , \zeta^{(c)}_k  \right\rangle \right] , y_\theta(t)   \right\rangle dt   \right]  \\
&+ \sum_{k \in \mathcal{R}_d} \E\left[  \int_{0}^{T}  \lambda_k (x_\theta(t) ,U_\theta(t) ,\theta )  \left \langle  \nabla  \left( \Delta_k f(x_\theta(t),U_\theta(t) )   \right)  , y_\theta(t)   \right\rangle dt   \right] \\
&+   \sum_{k \in \mathcal{R}_d } \E \left[ \int_0^T   \partial_{\theta}   \lambda_k (x_\theta(t) , U_\theta(t) ,\theta )    \Delta_k \Psi_{T- t}( x_\theta(t), U_\theta(t), \theta ) dt  \right]\\
&+   \sum_{k \in \mathcal{R}_d } \E \left[ \int_0^T   \left\langle \nabla \lambda_k (x_\theta(t) , U_\theta(t) ,\theta ), y_\theta(t)   \right\rangle \Delta_k \Psi_{T- t}( x_\theta(t), U_\theta(t), \theta ) dt  \right].
\end{align*}
\end{proposition}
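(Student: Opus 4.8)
The plan is to obtain the formula directly from the definition \eqref{defn_pdmp_sensitivity} of $\hat S_\theta(f,T)$ as the $h\to 0$ limit, by coupling the PDMP $Z_\theta=(x_\theta,U_\theta)$ with its perturbation $Z_{\theta+h}=(x_{\theta+h},U_{\theta+h})$ on one probability space and carrying out a first-order-in-$h$ analysis of the coupled trajectories on $[0,T]$. For the discrete components I would use a split time-change coupling: for each $k\in\mathcal{R}_d$, three independent unit-rate Poisson processes drive respectively the common rate $\lambda_k(Z_\theta,\theta)\wedge\lambda_k(Z_{\theta+h},\theta+h)$ and the two excesses $(\lambda_k(Z_\theta,\theta)-\lambda_k(Z_{\theta+h},\theta+h))^\pm$, while the continuous parts solve their ODEs in \eqref{defn_pdmp_param}. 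Let $\tau_h$ be the first disagreement time of $U_\theta$ and $U_{\theta+h}$. Assumption \ref{assmp:key}(B)--(C) make the propensities and their first derivatives bounded and Lipschitz on the compact set $C$, so a pathwise Gronwall bound gives $\|x_{\theta+h}(t)-x_\theta(t)\|=O(h)$ uniformly on $[0,T]$, whence the discrepancy rate is $O(h)$, $\P(\tau_h\le T)=O(h)$, and $\P(\text{two or more discrepancies in }[0,T])=O(h^2)$.

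Next I would split $\E[f(Z_{\theta+h}(T))]-\E[f(Z_\theta(T))]=A_h+B_h$ according to $\{\tau_h>T\}$ and $\{\tau_h\le T\}$. On $\{\tau_h>T\}$ the discrete parts coincide on $[0,T]$, so $\delta_h:=x_{\theta+h}-x_\theta$ (or its coupled surrogate always driven by $U_\theta$) obeys the variational equation and $h^{-1}\delta_h(t)\to y_\theta(t)$ uniformly, with $y_\theta$ solving \eqref{defn_y_thetat}. Rather than collapsing $A_h/h$ to $\E[\langle\nabla f(x_\theta(T),U_\theta(T)),y_\theta(T)\rangle]$ at once, I would expand $f(x_{\theta+h}(T),U_\theta(T))-f(x_\theta(T),U_\theta(T))$ along the path: the fundamental theorem of calculus between jumps of $U_\theta$ produces the drift-composed-with-$\nabla f$ term, and each $U_\theta$-jump contributes a $\Delta_k f$ increment. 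Subtracting the corresponding identity at parameter $\theta$, dividing by $h$, letting $h\to 0$, and compensating the jump sum against $\sum_{k\in\mathcal{R}_d}\lambda_k(x_\theta(t),U_\theta(t),\theta)\,dt$ yields precisely the first three sums of the claimed formula; the product-rule identity $\nabla[\lambda_k\langle\nabla f,\zeta^{(c)}_k\rangle]=(\nabla\lambda_k)\langle\nabla f,\zeta^{(c)}_k\rangle+\lambda_k\nabla\langle\nabla f,\zeta^{(c)}_k\rangle$ is what splits $\langle\nabla_x(\sum_{k\in\mathcal{R}_c}\lambda_k\langle\nabla f,\zeta^{(c)}_k\rangle),y_\theta\rangle$ into the desired shape, and confinement in $C$ together with Assumption \ref{assmp:key}(A)--(B) justifies the dominated convergence.

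For $B_h$ I would condition on the first discrepancy: it is caused by a single $k\in\mathcal{R}_d$ at a time $t\le T$, at which exactly one of the two processes has made one extra $\zeta^{(d)}_k$-jump relative to the other, and, using the Gronwall bound, the signed instantaneous rate of this event is $h\,D_\theta\lambda_k(x_\theta(t),U_\theta(t),\theta)+o(h)$ with $D_\theta\lambda_k$ as in \eqref{defn_part_tod_der}, the positive and negative parts of $D_\theta\lambda_k$ corresponding to an extra jump in $Z_{\theta+h}$ versus in $Z_\theta$. By the Markov property the conditional expected difference accrued from $t$ onward equals $\Delta_k\Psi_{T-t}(x_\theta(t),U_\theta(t),\theta)$ up to $o(1)$, the $O(h)$ corrections (from $x_{\theta+h}(t)$ versus $x_\theta(t)$ and from the $\theta+h$ propensities) being killed by the $O(h)$ discrepancy probability, and multi-discrepancies being $O(h^2)$. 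This gives $h^{-1}B_h\to\sum_{k\in\mathcal{R}_d}\E[\int_0^T D_\theta\lambda_k(x_\theta(t),U_\theta(t),\theta)\,\Delta_k\Psi_{T-t}(x_\theta(t),U_\theta(t),\theta)\,dt]$, and substituting $D_\theta\lambda_k=\partial_\theta\lambda_k+\langle\nabla\lambda_k,y_\theta\rangle$ yields the last two sums. Adding $\lim_{h\to 0}h^{-1}A_h$ and $\lim_{h\to 0}h^{-1}B_h$ proves the proposition.

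The main obstacle is the entanglement of the continuous and discrete perturbations through the propensity functions: the $O(h)$ deviation $h\,y_\theta(t)$ of $x_{\theta+h}$ from $x_\theta$ enters $\lambda_k(x_{\theta+h}(t),\cdot,\theta+h)$ at the same order as the explicit $\theta$-shift, which is exactly why the discrepancy rate carries the \emph{full} derivative $D_\theta\lambda_k$ rather than merely $\partial_\theta\lambda_k$, and why the second sum above pairs $\nabla\lambda_k$ with $y_\theta$. Making this rigorous requires controlling the coupled ODEs up to the random time $\tau_h$ and carefully bookkeeping which process performs the extra jump and with what sign. The confinement Assumption \ref{assmp:key}(C) and the uniform-on-compacts regularity in Assumption \ref{assmp:key}(A)--(B) are precisely what underwrite the Gronwall, dominated-convergence, and $O(h),O(h^2)$ estimates throughout.
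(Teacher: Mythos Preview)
Your proposal is correct and follows the paper's strategy closely: the same split (time-change) coupling of $Z_\theta$ and $Z_{\theta+h}$, the same first-disagreement time $\tau_h$, the same Gronwall-driven identification $h^{-1}(x_{\theta+h}-x_\theta)\to y_\theta$ on $\{\tau_h>t\}$, and the same conditioning on the reaction and time of the first discrepancy together with the Markov property to produce the $\Delta_k\Psi_{T-t}$ factor. The one organizational difference is in the ``no discrepancy'' contribution. The paper first applies Dynkin's formula to rewrite $\E[f(Z_{\theta+h}(T))]-\E[f(Z_\theta(T))]$ as a time integral of $\mathbb{A}_{\theta+h}f-\mathbb{A}_\theta f$ and then splits that integral at $\tau_h$; this generates auxiliary $\pm\partial_\theta\lambda_k\,\Delta_k f$ and $\pm\langle\nabla\lambda_k,y_\theta\rangle\,\Delta_k f$ terms (from the discrete part of the generator on $[0,\tau_h]$ and from the $-\Delta_k f$ piece of the after-$\tau_h$ analysis) which must be cancelled at the end. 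Your pathwise expansion of $f(x_{\theta+h}(T),U_\theta(T))-f(x_\theta(T),U_\theta(T))$ along the common $U_\theta$-trajectory, followed by compensation of the jump sum, arrives at the first three sums directly without these cancellations, which is a cleaner bookkeeping for the same computation. The after-$\tau_h$ analyses coincide; the paper makes the density computation explicit via the common-jump times $\sigma_i$ and an integration by parts, which is exactly the rigorous version of your ``signed instantaneous rate $h\,D_\theta\lambda_k+o(h)$'' heuristic.
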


\begin{proof}
Analogous to the ``split-coupling" introduced in \cite{DA} we couple the PDMPs $Z_\theta = (x_\theta, U_\theta)$ and $Z_{\theta+h} =  (x_{\theta+h} , U_{\theta+h} )$ as below
\begin{align*}
 &x_\theta(t)= x_0 + \sum_{k \in \mathcal{R}_c} \left( \int_{0}^t \lambda_k( x_\theta(s), U_\theta(s), \theta ) ds \right) \zeta^{(c)}_k \\
&x_{\theta+h}(t) = x_0 + \sum_{k \in \mathcal{R}_c} \left( \int_{0}^t \lambda_k( x_{\theta+h}(s), U_{\theta+h}(s), \theta +h ) ds \right) \zeta^{(c)}_k \\
&U_\theta(t)  = U_0 \\
&+ \sum_{k \in \mathcal{R}_d} Y_k \left( \int_{0}^t \lambda_k( x_\theta(s), U_\theta(s) ,\theta ) \wedge \lambda_k( x_{\theta+h}(s), U_{\theta+h}(s), \theta +h )   ds  \right) \zeta^{(d)}_k \\
& +  \sum_{k \in \mathcal{R}_d} Y^{(1)}_k \left( \int_{0}^t \lambda^{(1)}_k( x_\theta(s), U_\theta(s) ,\theta , x_{\theta+h}(s), U_{\theta+h}(s), \theta +h )   ds  \right) \zeta^{(d)}_k \\
&U_{\theta+h}(t)  = U_0\\
& + \sum_{k \in \mathcal{R}_d} Y_k \left( \int_{0}^t \lambda_k( x_\theta(s), U_\theta(s) ,\theta ) \wedge \lambda_k( x_{\theta+h}(s), U_{\theta+h}(s), \theta +h )   ds  \right) \zeta^{(d)}_k \\
& +  \sum_{k \in \mathcal{R}_d} Y^{(2)}_k \left( \int_{0}^t \lambda^{(2)}_k( x_\theta(s), U_\theta(s) ,\theta , x_{\theta+h}(s), U_{\theta+h}(s), \theta +h ) ds  \right) \zeta^{(d)}_k, 
\end{align*}
where $a \wedge b$ denotes the minimum of $a$ and $b$, $\{ Y_k , Y^{ (1) }_k , Y^{ (2)}_{k}\}$ is a collection of independent unit rate Poisson processes, and
\begin{align*}
&\lambda^{(1)}_k( x_\theta(s), U_\theta(s) ,\theta , x_{\theta+h}(s), U_{\theta+h}(s), \theta +h ) = \lambda_k( x_\theta(s), U_\theta(s) ,\theta )   \\
&-  \lambda_k( x_\theta(s), U_\theta(s) ,\theta ) \wedge \lambda_k( x_{\theta+h}(s), U_{\theta+h}(s), \theta +h ) \quad \textnormal{and} \\
 &  \lambda^{(2)}_k( x_\theta(s), U_\theta(s) ,\theta , x_{\theta+h}(s), U_{\theta+h}(s), \theta +h ) = \lambda_k( x_{\theta+h}(s), U_{\theta+h}(s) ,\theta +h ) \\
& -  \lambda_k( x_\theta(s), U_\theta(s) ,\theta ) \wedge \lambda_k( x_{\theta+h}(s), U_{\theta+h}(s), \theta +h ).
\end{align*}
Define a stopping time as the first time that processes $U_\theta$ and $U_{ \theta+h}$ separate, i.e.
\begin{align*}
\tau_h = \inf\{ t \geq 0: U_{\theta}(t) \neq U_{\theta+h}(t) \}.
\end{align*}
Observe that the generator for the PDMP $Z_\theta = (x_\theta, U_\theta)$ is
\begin{align*}
\mathbb{A}_\theta g(x,u) = \sum_{ k \in \mathcal{R}_c} \lambda_k(x,u,\theta) \left \langle \nabla g(x,u) ,  \zeta^{(c)}_k  \right \rangle  + \sum_{k \in \mathcal{R}_d} \lambda_k(x,u,\theta) \Delta_k g(x,u),
\end{align*}
where $g : \R^{S_c} \times \N^{S_d}_0$ is any function which is differentiable in the first $S_c$ coordinates. Applying Dynkin's formula we obtain
\begin{align*}
\E\left( f(x_\theta(t) ,U_\theta(t) )  \right) & = f(x_0,U_0) + \E\left( \int_{0}^t \mathbb{A}_\theta f( x_\theta(s) , U_\theta(s) ) ds \right) \\
\textnormal{and}  \quad \E\left( f(x_{\theta+h}(t) ,U_{\theta+h}(t) )  \right) & = f(x_0,U_0) + \E\left( \int_{0}^t \mathbb{A}_{\theta+h} f( x_{\theta+h}(s) , U_{\theta+h}(s) ) ds \right).
\end{align*}
The above coupling between processes $Z_\theta = (x_\theta, U_\theta)$ and $Z_{\theta+h} =  (x_{\theta+h} , U_{\theta+h} )$ ensures that for $0 \leq s \leq \tau_h$ we have $U_{\theta+h}(s) = U_{\theta}(s)$ and $x_{\theta+h}(s) = x_\theta(s) + h y_\theta(t) +o(h)$. Noting that $\tau_h \to \infty$ a.s. as $h \to 0$ we obtain
\begin{align}
\label{mainpropproof_part1}
&\lim_{h \to 0 } \frac{1}{h} \left[   \E\left( \int_{0}^{\tau_h\wedge t} \mathbb{A}_{\theta+h} f( x_{\theta+h}(s) , U_{\theta+h}(s) ) ds \right) - \E\left( \int_{0}^{\tau_h\wedge t} \mathbb{A}_{\theta} f( x_{\theta}(s) , U_{\theta}(s) ) ds \right) \right] \notag \\
& = \lim_{h \to 0 } \frac{1}{h} \left[  \E\left( \int_{0}^{\tau_h\wedge t}  \left[ \mathbb{A}_{\theta+h} f( x_{\theta+h}(s) , U_{\theta}(s) ) -   \mathbb{A}_{\theta} f( x_{\theta}(s) , U_{\theta}(s) )   \right]   ds    \right)  \right] \notag  \\
& = \sum_{k \in \mathcal{R}_c} \E\left[  \int_{0}^{ t}  \partial_\theta \lambda_k  (x_\theta(s) ,U_\theta(s) ,\theta ) \left\langle \nabla f(x_\theta(s),U_\theta(s) ) , \zeta^{(c)}_k  \right\rangle ds   \right]  \notag  \\
&+ \sum_{k \in \mathcal{R}_c} \E\left[  \int_{0}^{t}  \left \langle \nabla \left[   \lambda_k (x_\theta(s) ,U_\theta(s) ,\theta ) \left\langle \nabla f(x_\theta(s),U_\theta(s) ) , \zeta^{(c)}_k  \right\rangle \right] , y_\theta(s)   \right\rangle ds   \right]  \notag  \\
&+ \sum_{k \in \mathcal{R}_d} \E\left[  \int_{0}^{ t}  \left \langle \nabla \left[   \lambda_k (x_\theta(s) ,U_\theta(s) ,\theta ) \Delta_k f(x_\theta(s),U_\theta(s)  )  \right] , y_\theta(s)   \right\rangle ds   \right]  \notag  \\
& + \sum_{k \in \mathcal{R}_d} \E\left[  \int_{0}^{ t}  \partial_\theta \lambda_k(x_\theta(s) ,U_\theta(s) ,\theta ) \Delta_k f(x_\theta(s),U_\theta(s))    ds   \right].
\end{align}

Let $\sigma_0 = 0$ and for each $i=1,2,\dots$ let $\sigma_i$ denote the $i$-th jump time of the process
\begin{align*}
 \sum_{k \in \mathcal{R}_d} Y_k \left( \int_{0}^t \lambda_k( x_\theta(s), U_\theta(s) ,\theta ) \wedge \lambda_k( x_{\theta+h}(s), U_{\theta+h}(s), \theta +h )   ds  \right)
\end{align*}
which counts the common jump times among processes $U_\theta$ and $U_{\theta+h}$. Observe that
\begin{align*}
&\lim_{h \to 0 } \frac{1}{h} \left[ \E\left( \int_{\tau_h\wedge t}^t \mathbb{A}_{\theta+h} f( x_{\theta+h}(s) , U_{\theta+h}(s) ) ds \right) - \E\left( \int_{\tau_h\wedge t}^t \mathbb{A}_{\theta} f( x_{\theta}(s) , U_{\theta}(s) ) ds \right) \right] \\
& = \sum_{i=0}^\infty \lim_{h \to 0 }   \frac{1}{h} \E\left[ \ind_{ \{ \sigma_i \wedge t \leq \tau_h < \sigma_{i+1} \wedge t   \} }  \int_{\tau_h\wedge t}^t \left( \mathbb{A}_{\theta+h} f( x_{\theta+h}(s) , U_{\theta+h}(s) ) \right. \right. \\ & \left. \left.  \qquad \qquad -  \mathbb{A}_{\theta} f( x_{\theta}(s) , U_{\theta}(s) )  \right) ds \right. \bigg]. 
\end{align*}
Recall the definition of $D_\theta \lambda_k(  x_\theta(t), U_\theta(t) ,\theta)$ from \eqref{defn_part_tod_der}. We shall soon prove that
\begin{align}
\label{prop:toproveadtertau}
& \lim_{h \to 0 }   \frac{1}{h} \E\left[ \ind_{ \{ \sigma_i \wedge t \leq \tau_h < \sigma_{i+1} \wedge t   \} }  \int_{\tau_h\wedge t}^t \left( \mathbb{A}_{\theta+h} f( x_{\theta+h}(s) , U_{\theta+h}(s) )\right. \right. \\ & \left. \left.  \qquad \qquad \qquad  -  \mathbb{A}_{\theta} f( x_{\theta}(s) , U_{\theta}(s) )  \right) ds  \right. \bigg]. \notag \\
& =  \sum_{k \in \mathcal{R}_d} \E \left[  \int_{ t \wedge \sigma_i}^{ t \wedge\sigma_{i +1} }  D_\theta \lambda_k(  x_\theta(s), U_\theta(s) ,\theta) \left(    \Delta_k \Psi_{t- s}( x_\theta(s), U_\theta(s), \theta ) )\right. \right. \notag \\ & \left. \left.  \qquad \qquad \qquad  -   \Delta_k f( x_\theta(s), U_\theta(s)) \right) ds  \right. \bigg]. \notag
\end{align}
Assuming this for now we get
\begin{align*}
&\lim_{h \to 0 } \frac{1}{h} \left[ \E\left( \int_{\tau_h\wedge t}^t \mathbb{A}_{\theta+h} f( x_{\theta+h}(s) , U_{\theta+h}(s) ) ds \right) - \E\left( \int_{\tau_h\wedge t}^t \mathbb{A}_{\theta} f( x_{\theta}(s) , U_{\theta}(s) ) ds \right) \right] \\
& = \sum_{k \in \mathcal{R}_d} \sum_{i=0}^\infty  \E \left[  \int_{t \wedge \sigma_i}^{ t \wedge \sigma_{i+1} }  D_\theta \lambda_k(  x_\theta(s), U_\theta(s) ,\theta) \left(    \Delta_k \Psi_{t- s}( x_\theta(s), U_\theta(s), \theta ) \right. \right. \notag \\ & \left. \left.  \qquad \qquad \qquad  -   \Delta_k f( x_\theta(s), U_\theta(s)) \right) ds  \right. \bigg] \\
& =  \sum_{k \in \mathcal{R}_d} \E \left[  \int_{0}^{ t}  \partial_\theta \lambda_k(  x_\theta(s), U_\theta(s) ,\theta) \Delta_k \Psi_{t- s}( x_\theta(s), U_\theta(s), \theta ) ds  \right] \\
& - \sum_{k \in \mathcal{R}_d} \E \left[  \int_{0}^{ t}  \partial_\theta \lambda_k(  x_\theta(s), U_\theta(s) ,\theta) \Delta_k f( x_\theta(s), U_\theta(s))  ds  \right] \\
&+  \sum_{k \in \mathcal{R}_d} \E \left[  \int_{0}^{ t}   \left\langle \nabla \lambda_k(x_\theta( s) ,U_\theta(s),\theta ) , y_\theta( s) \right\rangle   \Delta_k \Psi_{t- s}( x_\theta(s), U_\theta(s), \theta ) ds  \right] \\
& - \sum_{k \in \mathcal{R}_d} \E \left[  \int_{0}^{ t}  \left\langle \nabla \lambda_k(x_\theta( s) ,U_\theta(s),\theta ) , y_\theta( s) \right\rangle    \Delta_k f( x_\theta(s), U_\theta(s)) ds  \right] . 
\end{align*}
Combining this formula with \eqref{mainpropproof_part1} we obtain
\begin{align*}
& \hat{S}_\theta(f,t) \\
&= \lim_{h \to 0} \frac{ \E\left(  f(x_{\theta+h} (t), U_{\theta+h} (t)  ) \right)  - \E\left(   f(x_{\theta} (t), U_{\theta} (t) \right) }{h} \\
& =  \lim_{h \to 0} \frac{1}{h} \E\left[ \int_{0}^t \left( \mathbb{A}_{\theta+h} f( x_{\theta+h}(s) , U_{\theta+h}(s) )  -  \mathbb{A}_{\theta} f( x_{\theta}(s) , U_{\theta}(s) )  \right) ds \right] \\
& =  \lim_{h \to 0} \frac{1}{h} \E\left[ \int_{0}^{t \wedge \tau_h } \left( \mathbb{A}_{\theta+h} f( x_{\theta+h}(s) , U_{\theta+h}(s) )  -  \mathbb{A}_{\theta} f( x_{\theta}(s) , U_{\theta}(s) )  \right) ds \right]  \\
& + \lim_{h \to 0} \frac{1}{h} \E\left[ \int_{t \wedge \tau_h }^t \left( \mathbb{A}_{\theta+h} f( x_{\theta+h}(s) , U_{\theta+h}(s) )  -  \mathbb{A}_{\theta} f( x_{\theta}(s) , U_{\theta}(s) )  \right) ds \right]  \\
& = \sum_{k \in \mathcal{R}_c} \E\left[  \int_{0}^{t}  \partial_\theta \lambda_k  (x_\theta(s) ,U_\theta(s) ,\theta ) \left\langle \nabla f(x_\theta(s),U_\theta(s) ) , \zeta^{(c)}_k  \right\rangle ds   \right] \\
&+ \sum_{k \in \mathcal{R}_c} \E\left[  \int_{0}^{t}  \left \langle \nabla \left[   \lambda_k (x_\theta(s) ,U_\theta(s) ,\theta ) \left\langle \nabla f(x_\theta(s),U_\theta(s) ) , \zeta^{(c)}_k  \right\rangle \right] , y_\theta(s)   \right\rangle ds   \right]  \\
&+ \sum_{k \in \mathcal{R}_d} \E\left[  \int_{0}^{t}  \left \langle \nabla \left[   \lambda_k (x_\theta(s) ,U_\theta(s) ,\theta ) \Delta_k f(x_\theta(s),U_\theta(s)  )   \right] , y_\theta(s)   \right\rangle ds   \right] \\
& + \sum_{k \in \mathcal{R}_d} \E\left[  \int_{0}^{t} \partial_\theta \lambda_k  (x_\theta(s) ,U_\theta(s) ,\theta )  \Delta_k f(x_\theta(s) , U_\theta( s)  )  ds   \right] \\
&+  \sum_{k \in \mathcal{R}_d} \E \left[  \int_{ 0}^{ t } \partial_\theta \lambda_k  (x_\theta(s) ,U_\theta(s) ,\theta )   \Delta_k \Psi_{t- s}( x_\theta(s), U_\theta(s), \theta ) ds  \right] \\
&  + \sum_{k \in \mathcal{R}_d} \E \left[  \int_{ 0}^{ t }  \left\langle \nabla \lambda_k(x_\theta( s) ,U_\theta(s),\theta ) , y_\theta( s) \right\rangle  \Delta_k \Psi_{t- s}( x_\theta(s), U_\theta(s), \theta ) ds  \right] \\
&  - \sum_{k \in \mathcal{R}_d} \E \left[  \int_{ 0}^{ t } \partial_\theta \lambda_k  (x_\theta(s) ,U_\theta(s) ,\theta )  \Delta_k f(x_\theta(s) , U_\theta( s)  ) ds  \right]  \\
&  - \sum_{k \in \mathcal{R}_d} \E \left[  \int_{ 0}^{ t }  \left\langle \nabla \lambda_k(x_\theta( s) ,U_\theta(s),\theta ) , y_\theta( s) \right\rangle  \Delta_k f(x_\theta(s) , U_\theta( s)  ) ds  \right].
\end{align*}
In the last expression, the fourth term cancels with the sixth term. Expanding the third term via the product rule $\nabla (gh) = g \nabla h + h \nabla g$ produces two terms, one of which cancels with the last term and then we obtain the result stated in the statement of this proposition. Therefore to prove this proposition the only step remaining is to show \eqref{prop:toproveadtertau}. This is what we do next.

Assume that $x_\theta( \sigma_i ) = x$, $x_{\theta+h}( \sigma_i ) = x(h) =  x + o(1)$, $U_\theta(\sigma_i) =U_{\theta+h}(\sigma_i) =U$ and $\{\tau_h > \sigma_i\}$. Given this information $\mathcal{F}_i$, the random time $\delta_i = ( \tau_h -\sigma_i ) \wedge (\sigma_{i+1} -\sigma_i )$ has distribution that satisfies
\begin{align}
\label{distrideltai}
 \mathbb{P}\left(  \delta_i  \leq w  \vert \mathcal{F}_i  \right) = 1  - \exp\left(- \int_{0}^w \lambda_0( x_\theta(s + \sigma_i) , U ,\theta ) ds  \right) + o(1), \textnormal{ for } w \in [0, \infty)
\end{align}
where $\lambda_0(x,U,\theta) = \sum_{k \in \mathcal{R}_d} \lambda_k(x,U,\theta)$. Given $\delta_i  = w$, the probability that event \\$\{ (\sigma_{i+1} -\sigma_i ) >  ( \tau_h -\sigma_i ) \}$ occurs (i.e. $\delta_i = \tau_h -\sigma_i$) and the perturbation reaction is $k \in \mathcal{R}_d$ is simply
\begin{align*}
&\frac{1}{ \lambda_0(x_\theta( \sigma_i +w) , U,\theta  ) }\left| D_\theta \lambda_k(x_\theta(\sigma_i+w) , U,\theta ) \right| h +o(h).
\end{align*}
If $D_\theta \lambda_k(x_\theta(\sigma_i+w) , U,\theta ) > 0$ then at time $\tau_h$ process $U_{ \theta +h}$ jumps by $\zeta^{ (d) }_k$, and if $D_\theta \lambda_k(x_\theta(\sigma_i+w) , U,\theta ) <0$, process $U_{ \theta }$ jumps by $\zeta^{ (d) }_k$. We will suppose that the first situation holds but the other case can be handled similarly. Assuming $w < (t - \sigma_i)$ we have
 \begin{align*}
 & \lim_{h \to 0} \E\left( \int_{\tau_h\wedge t}^t \left( \mathbb{A}_{\theta+h} f( x_{\theta+h}(s) , U_{\theta+h}(s) )  -  \mathbb{A}_{\theta} f( x_{\theta}(s) , U_{\theta}(s) )  \right) ds  \middle\vert \mathcal{F}_i, \tau_h = \sigma_i + w ,k  \right)   \\
 & = \Delta_{k} \Psi_{t - \sigma_i -w} ( x_\theta( \sigma_i + w), U_\theta(  \sigma_i+w ), \theta ) -   \Delta_{k} f( x_\theta( \sigma_i + w), U_\theta(  \sigma_i+w ) ) \\
 & := G_k(  x_\theta( \sigma_i +w) , U_\theta( \sigma_i +w )  , t - \sigma_i-w)
\end{align*}
and as $\delta_i$ has distribution \eqref{distrideltai}, we obtain
\begin{align}
\label{mainpropproof_intstep1}
& \lim_{h \to 0} \frac{1}{h} \E\left[ \ind_{ \{ \sigma_i \wedge t \leq \tau_h < \sigma_{i+1} \wedge t   \} }  \int_{\tau_h\wedge t}^t \left( \mathbb{A}_{\theta+h} f( x_{\theta+h}(s) , U_{\theta+h}(s) )  -  \mathbb{A}_{\theta} f( x_{\theta}(s) , U_{\theta}(s) )  \right) ds \right] \notag \\
&= \sum_{k \in \mathcal{R}_d} \E \left[  \ind_{ \{ \sigma_i \leq t \}  } \int_{0}^{t - \sigma_i}  G_k(  x_\theta( \sigma_i +w) , U_\theta( \sigma_i +w )  , t - \sigma_i-w)    \right. \\  & \left. D_\theta \lambda_k(x_\theta(\sigma_i+w) , U_\theta(\sigma_i+w),\theta )  \exp \left( -\int_{0}^w \lambda_0(x_\theta( \sigma_i+s) , U_\theta(\sigma_i+s),\theta  ) ds \right)dw  \right]. \notag 
\end{align}

Note that given $\sigma_i < t$ and $\mathcal{F}_i$, the random variable $\gamma_i =  (t \wedge\sigma_{i +1} -  t \wedge\sigma_{i })$ has probability density function given by
\begin{align*}
p(w) =  \lambda_0(x_\theta( \sigma_i+ w ) , U_\theta(\sigma_i+w),\theta  )  \exp\left( -\int_{0}^w \lambda_0(x_\theta( \sigma_i+ u ) , U_\theta(\sigma_i+u),\theta  ) du  \right),
\end{align*}
for $w \in [0, t -\sigma_i)$ and  $ \mathbb{P}\left(  \gamma_i  \leq w  \vert \mathcal{F}_i  \right) =1$ if $w \geq (t - \sigma_i)$. Letting
\begin{align*} 
G(s,t)& = G_k(  x_\theta( s) , U_\theta( s)  , t - s) D_\theta \lambda_k(x_\theta(s) , U_\theta(s),\theta )\\
\quad \textnormal{and} \quad  P(w) & = \int_{w}^\infty p(u)du =   \exp\left( -\int_{0}^w \lambda_0(x_\theta( \sigma_i+ u ) , U_\theta(\sigma_i+u),\theta  ) du  \right)
\end{align*}
we have
\begin{align*}
&\E\left( \int_{t \wedge \sigma_i}^{ t \wedge \sigma_{i+1} }  G(s,t) ds \middle\vert \mathcal{F}_i , \sigma_i < t \right)  =  \E\left( \int_{0}^{ \gamma_i }  G(s+\sigma_i,t) ds \middle\vert \mathcal{F}_i , \sigma_i < t \right) \\
& =  \mathbb{P}\left( \gamma_i \geq t - \sigma_i \middle\vert \mathcal{F}_i , \sigma_i < t \right) \int_{0}^{t - \sigma_i}  G(s+\sigma_i,t) ds  \\
&+ \E\left( \ind_{ \{ 0 \leq \gamma_i <(t - \sigma_i) \} }  \int_{0}^{ \delta_i }  G(s+\sigma_i,t) ds \middle\vert \mathcal{F}_i , \sigma_i < t \right) \\
& = P(t - \sigma_i) \int_{0}^{t - \sigma_i}  G(s+\sigma_i,t) ds + \int_{0}^{t - \sigma_i} p(w) \left( \int_{0}^{ w }  G(s+\sigma_i,t) ds \right) dw. 
\end{align*}  
Using integration by parts 
\begin{align*}
\int_{0}^{t - \sigma_i} p(w) \left( \int_{0}^{ w }  G(s+\sigma_i,t) ds \right) dw &= -P(t - \sigma_i)\left( \int_{0}^{ t -\sigma_i }  G(s+\sigma_i,t) ds \right) \\
&+\int_{0}^{t - \sigma_i} P(w) G(w+\sigma_i,t) dw
\end{align*}
which shows that
\begin{align*}
 \int_{0}^{t - \sigma_i} P(w) G(w+\sigma_i,t) ds = \E\left( \int_{t \wedge \sigma_i}^{ t \wedge \sigma_{i+1} }  G(s,t) ds \middle\vert \mathcal{F}_i , \sigma_i < t \right) .
\end{align*}
Substituting this expression in \eqref{mainpropproof_intstep1} gives us
\begin{align*}
& \lim_{h \to 0} \frac{1}{h} \E\left[ \int_{\tau_h\wedge t}^t \left( \mathbb{A}_{\theta+h} f( x_{\theta+h}(s) , U_{\theta+h}(s) )  -  \mathbb{A}_{\theta} f( x_{\theta}(s) , U_{\theta}(s) )  \right) ds \right] \\
& = \sum_{i=0}^\infty \lim_{h \to 0} \frac{1}{h} \E\left[ \ind_{ \{ \sigma_i \wedge t \leq \tau_h < \sigma_{i+1} \wedge t   \} }  \int_{\tau_h\wedge t}^t \left( \mathbb{A}_{\theta+h} f( x_{\theta+h}(s) , U_{\theta+h}(s) ) \right. \right. \notag \\ & \left. \left.  \qquad \qquad \qquad  -  \mathbb{A}_{\theta} f( x_{\theta}(s) , U_{\theta}(s) )  \right) ds \right. \bigg]\\
&= \sum_{k \in \mathcal{R}_d} \sum_{i=0}^\infty \E \left[  \int_{ t \wedge \sigma_i}^{ t \wedge\sigma_{i +1} } G_k(  x_\theta( s) , U_\theta( s)  , t - s) D_\theta \lambda_k(x_\theta(s) , U_\theta(s),\theta ) ds  \right].
\end{align*}
This proves \eqref{prop:toproveadtertau} and completes the proof of this proposition.
\end{proof}

Define a $S_c \times S_c$ matrix by
\begin{align*}
M(x,U,\theta) =  \sum_{k \in \mathcal{R}_c  } \zeta^{ (c) }_{k}    (  \nabla \lambda_k(x,U,\theta) )^*
\end{align*}
for any $(x , U, \theta) \in \R^{S_c}  \times \N^{S_d}_0 \times \R$, where $v^*$ denotes the transpose of $v$. Let $\Phi(x_0,U_0,t)$ be the solution of the linear matrix-valued equations
\begin{align}
\label{defn:phixut}
\frac{d }{dt} \Phi(x_0,U_0,t) =   M(x_\theta(t) , U_\theta(t) ,\theta )  \Phi(x_0,U_0,t)
\end{align}
with $\Phi(x_0,U_0,0) = {\bf I}$, which is the $S_c \times S_c$ identity matrix. Here $(x_0, U_0)$ denotes the initial state of $(x_\theta(t) ,U_\theta(t) )$. It can be seen that $y_\theta(t)$, which is the solution of IVP \eqref{defn_y_thetat}, can be written as
\begin{align}
\label{defn_ythetat}
y_\theta(t) = \sum_{k \in \mathcal{R}_c  }  \int_0^t  \partial_\theta \lambda_k ( x_\theta(s) , U_\theta(s) , \theta)   \Phi(x_\theta (s),U_\theta(s),t - s) \zeta_k^{(c)}   ds.
\end{align}
This shall be useful in proving the next proposition which considers the sensitivity of $\Psi_t (x_\theta(t) , U_\theta(t) ,\theta )$ to the initial value of the continuous state $x_0$.

\begin{proposition}
\label{prop:secondprop}
Let $\Phi(x_0,U_0,t)$ be the matrix-valued function defined above. Then we can express the gradient of $\Psi_t(x_0,U_0,\theta) $ w.r.t. $x_0$ as
\begin{align}
\label{secondprop:mainidentity}
&\nabla \Psi_t(x_0,U_0,\theta) = \nabla f(x_0,U_0) \\
&+  \sum_{k \in \mathcal{R}_c} \E\left[  \int_{0}^{t}  \Phi^*(x_0, U_0,s)    \nabla \left[   \lambda_k (x_\theta(s) ,U_\theta(s) ,\theta ) \left \langle \nabla f(x_\theta(s),U_\theta(s) ) , \zeta^{(c)}_k \right\rangle  \right] ds   \right]  \notag  \\
&+ \sum_{k \in \mathcal{R}_d} \E\left[  \int_{0}^{t}  \lambda_k (x_\theta(s) ,U_\theta(s) ,\theta )  \Phi^*(x_0, U_0,s) \nabla \left( \Delta_k  f(x_\theta(s),U_\theta(s)  )    \right) ds   \right]  \notag  \\
&  + \sum_{k \in \mathcal{R}_d} \E \left[  \int_{ 0}^{ t }  \Phi^*(x_0, U_0,s) \nabla \lambda_k(x_\theta( s) ,U_\theta(s),\theta ) \Delta_k \Psi_{t-s}( x_\theta( s) , U_\theta( s ) ,\theta )  ds  \right].  \notag
\end{align}
\end{proposition}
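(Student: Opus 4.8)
The plan is to rerun the coupling argument from the proof of Proposition~\ref{prop:mainprop}, but now perturbing the \emph{initial continuous state} $x_0$ rather than the parameter $\theta$. Fix a direction $v\in\R^{S_c}$ and a small $h>0$; let $(x_\theta,U_\theta)$ be the PDMP started from $(x_0,U_0)$ and let $(x^h,U^h)$ be a second copy started from $(x_0+hv,U_0)$, both carrying the \emph{same} parameter $\theta$ and coupled through the same split-coupling of the discrete reactions used in Proposition~\ref{prop:mainprop} (shared clocks at rate $\lambda_k\wedge\lambda^h_k$, independent clocks for the discrepancy). Let $\tau_h$ be the first time the discrete components separate. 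Because the $x$-Jacobian of $\sum_{k\in\mathcal{R}_c}\lambda_k(x,U,\theta)\zeta^{(c)}_k$ is exactly $M(x,U,\theta)$, and $M(x_\theta(\cdot),U_\theta(\cdot),\theta)$ is bounded on $[0,T]$ by Assumption~\ref{assmp:key}(C) together with continuity of the $\nabla\lambda_k$, Gronwall's inequality yields, for $0\le s\le\tau_h$, that $U^h(s)=U_\theta(s)$ and $x^h(s)=x_\theta(s)+h\,\Phi(x_0,U_0,s)\,v+o(h)$, with $\Phi$ the matrix solution of \eqref{defn:phixut}; recall that \eqref{defn_ythetat} already records the analogous linearization for $y_\theta$ (with a forcing term and zero initial data in place of the homogeneous solution with initial data $v$).

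Next I would write $\Psi_t(x_0+hv,U_0,\theta)-\Psi_t(x_0,U_0,\theta)=\E f(x^h(t),U^h(t))-\E f(x_\theta(t),U_\theta(t))$, apply Dynkin's formula to both expectations, subtract, divide by $h$, and let $h\to0$ (so $\tau_h\to\infty$ a.s.), splitting the time integral at $\tau_h$ exactly as in Proposition~\ref{prop:mainprop}. Three modifications arise relative to that computation. First, the two copies no longer start at the same point, so the leading constants do not cancel: $f(x_0+hv,U_0)-f(x_0,U_0)=h\langle\nabla f(x_0,U_0),v\rangle+o(h)$, which contributes the term $\nabla f(x_0,U_0)$ in \eqref{secondprop:mainidentity}. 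Second, $\theta$ is not perturbed, so the generator is $\mathbb{A}_\theta$ for both copies and every $\partial_\theta\lambda_k$ term — in \eqref{mainpropproof_part1} and in the after-$\tau_h$ analysis — vanishes. Third, the process $y_\theta$ solving \eqref{defn_y_thetat} is replaced everywhere by $s\mapsto\Phi(x_0,U_0,s)v$, the solution of the \emph{homogeneous} variational equation with initial data $v$; accordingly the quantity $D_\theta\lambda_k$ governing the separation rate in \eqref{prop:toproveadtertau} is replaced by $\langle\nabla\lambda_k(x_\theta(s),U_\theta(s),\theta),\Phi(x_0,U_0,s)v\rangle$.

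Carrying this through, the interval $[0,\tau_h\wedge t]$ contributes, in the limit, $\langle\,\cdot\,,v\rangle$ applied to
\[\sum_{k\in\mathcal{R}_c}\E\!\int_0^t\!\Phi^*(x_0,U_0,s)\,\nabla\!\big[\lambda_k\langle\nabla f,\zeta^{(c)}_k\rangle\big]ds+\sum_{k\in\mathcal{R}_d}\E\!\int_0^t\!\Phi^*(x_0,U_0,s)\,\nabla\!\big[\lambda_k\,\Delta_k f\big]ds,\]
with integrands evaluated at $(x_\theta(s),U_\theta(s),\theta)$, while the interval $[\tau_h\wedge t,t]$ contributes, by repeating verbatim the derivation of \eqref{prop:toproveadtertau} with the replacement above and using $\langle a,\Phi(x_0,U_0,s)v\rangle=\langle\Phi^*(x_0,U_0,s)a,v\rangle$, the quantity $\langle\,\cdot\,,v\rangle$ applied to
\[\sum_{k\in\mathcal{R}_d}\E\!\int_0^t\!\Phi^*(x_0,U_0,s)\,\nabla\lambda_k\,\big(\Delta_k\Psi_{t-s}-\Delta_k f\big)ds.\]
Expanding $\nabla[\lambda_k\Delta_k f]=\lambda_k\nabla(\Delta_k f)+\Delta_k f\,\nabla\lambda_k$, the $\Phi^*\,\Delta_k f\,\nabla\lambda_k$ terms cancel between the two displays; adding the $\nabla f(x_0,U_0)$ term then leaves precisely the right-hand side of \eqref{secondprop:mainidentity} paired with $v$, and since $v$ is arbitrary the identity follows. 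I expect the only real obstacle to be the limit-interchange bookkeeping near $\tau_h$ — the \eqref{distrideltai}-type expansions of the separation density and the attendant dominated convergence — but this is word-for-word the same as in Proposition~\ref{prop:mainprop} and can simply be invoked; the genuinely new input is the first-order expansion $x^h(s)=x_\theta(s)+h\,\Phi(x_0,U_0,s)v+o(h)$ up to $\tau_h$, which is a routine Gronwall estimate once $M$ is known to be bounded along the path.
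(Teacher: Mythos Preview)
Your proposal is correct and follows essentially the same approach as the paper: both argue by reducing to the directional derivative $\langle\nabla\Psi_t,v\rangle$, rerunning the split-coupling of Proposition~\ref{prop:mainprop} with the initial continuous state perturbed by $hv$ in place of $\theta$, observing that the variational process is $s\mapsto\Phi(x_0,U_0,s)v$, and noting that all $\partial_\theta\lambda_k$ terms drop out. Your write-up is in fact more explicit than the paper's --- you spell out the extra $\langle\nabla f(x_0,U_0),v\rangle$ contribution from the mismatched initial values in Dynkin's formula and the product-rule cancellation that yields the final form --- but the argument is the same.
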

\begin{proof}
To prove this proposition it suffices to show that for any vector $v \in \R^{S_c}$, the inner product of $v$ with the l.h.s. of \eqref{secondprop:mainidentity} is same as the inner product of $v$ with the r.h.s. of \eqref{secondprop:mainidentity}. Defining 
\begin{align*}
y(t) =   \Phi(x_0, U_0,t) v
\end{align*}
our aim is to prove that
\begin{align}
\label{secondprop:mainidentity2}
&\left\langle \nabla \Psi_t(x_0,U_0,\theta) , v \right \rangle = \left\langle \nabla f(x_0,U_0) ,  v  \right \rangle  \\
&+  \sum_{k \in \mathcal{R}_c} \E\left[  \int_{0}^{t}    \left\langle     \nabla \left[   \lambda_k (x_\theta(s) ,U_\theta(s) ,\theta ) \left \langle \nabla f(x_\theta(s),U_\theta(s) ) , \zeta^{(c)}_k \right\rangle  \right] , y(s)  \right \rangle  ds   \right]  \notag  \\
&+ \sum_{k \in \mathcal{R}_d} \E\left[  \int_{0}^{t}  \lambda_k (x_\theta(s) ,U_\theta(s) ,\theta )  \left \langle \nabla \left( \Delta_k  f(x_\theta(s),U_\theta(s)  ) , y(s) \right)  \right\rangle   ds   \right]  \notag  \\
&  + \sum_{k \in \mathcal{R}_d} \E \left[  \int_{ 0}^{ t }  \left \langle \nabla \lambda_k(x_\theta( s) ,U_\theta(s),\theta ) , y(s) \right\rangle  \Delta_k \Psi_{t-s}( x_\theta( s) , U_\theta( s ) ,\theta )  ds  \right].  \notag
\end{align}
Note that $y(t)$ solves the IVP
\begin{align}
\label{defn_y2}
\frac{d y}{dt} &= \sum_{k \in \mathcal{R}_c} \ \left\langle  \nabla  \lambda_k(x_\theta(t) , U_\theta(t) ,\theta ) , y(t)  \right\rangle  \zeta^{(c)}_k  \\
\textnormal{and}  & \qquad y(0) = v,  \notag
\end{align}
which shows that $y(t)$ is the directional derivative of $x_\theta(t)$ (see \eqref{defn_pdmp_param}) w.r.t. the initial state $x_0$ in the direction $v$.

This proposition can be proved in the same way as Proposition \ref{prop:mainprop}, by coupling process $(x_\theta, U_\theta)$ with another process $(x_{\theta,h} , U_{\theta,h} )$ according to
\begin{align*}
x_\theta(t) &= x_0 + \sum_{k \in \mathcal{R}_c} \left( \int_{0}^t \lambda_k( x_\theta(s), U_\theta(s), \theta ) ds \right) \zeta^{(c)}_k \\
x_{\theta,h}(t) &= x_0 + h v + \sum_{k \in \mathcal{R}_c} \left( \int_{0}^t \lambda_k( x_{\theta, h}(s), U_{\theta, h}(s), \theta ) ds \right) \zeta^{(c)}_k \\
U_\theta(t) & = U_0 + \sum_{k \in \mathcal{R}_d} Y_k \left( \int_{0}^t \lambda_k( x_\theta(s), U_\theta(s) ,\theta ) \wedge \lambda_k( x_{\theta, h}(s), U_{\theta,h}(s), \theta )   ds  \right) \zeta^{(d)}_k \\
& +  \sum_{k \in \mathcal{R}_d} Y^{(1)}_k \left( \int_{0}^t \lambda^{(1)}_k( x_\theta(s), U_\theta(s) ,\theta , x_{\theta, h}(s), U_{\theta, h}(s), \theta )   ds  \right) \zeta^{(d)}_k \\
U_{\theta,h}(t) & = U_0 + \sum_{k \in \mathcal{R}_d} Y_k \left( \int_{0}^t \lambda_k( x_\theta(s), U_\theta(s) ,\theta ) \wedge \lambda_k( x_{\theta, h}(s), U_{\theta, h}(s), \theta )   ds  \right) \zeta^{(d)}_k \\
& +  \sum_{k \in \mathcal{R}_d} Y^{(2)}_k \left( \int_{0}^t \lambda^{(2)}_k( x_\theta(s), U_\theta(s) ,\theta , x_{\theta , h}(s), U_{\theta , h}(s), \theta  ) ds  \right) \zeta^{(d)}_k, 
\end{align*}
where $\{ Y_k , Y^{ (1) }_k , Y^{ (2)}_{k}\}$ is a collection of independent unit-rate Poisson processes, and $\lambda^{(1)}_k$, $\lambda^{(2)}_k$ are as in the proof of Proposition \ref{prop:mainprop}. An important difference between this proposition and Proposition \ref{prop:mainprop}, is that the value of $\theta$ is the same in the coupled processes, and hence the only difference between the two processes comes due to difference in the initial continuous state $x_0$. Consequently the $ \partial_\theta \lambda_k$ terms in the statement of Proposition \ref{prop:mainprop} \emph{disappear} and we obtain \eqref{secondprop:mainidentity2}. 
\end{proof}

\begin{proof}[Proof of Theorem \ref{thm:convergence}]
Define
\begin{align*}
L(t) = \sum_{k \in  \mathcal{R}_c} \E\left[  \int_{0}^t   \partial_\theta \lambda_k   (x_\theta(s) ,U_\theta(s) ,\theta ) \left\langle  \nabla \Psi_{t-s}(x_\theta(s),U_\theta(s),t-s) , \zeta^{(c)}_k  \right\rangle ds \right]. 
\end{align*}
Due to Proposition \ref{prop:mainprop}, to prove Theorem \ref{thm:convergence} it suffices to prove that
\begin{align}
\label{cond_suff_mainthem}
 L(T) & = \sum_{k \in \mathcal{R}_c} \E\left[   \int_{0}^T   \partial_\theta \lambda_k  (x_\theta(t) ,U_\theta(t) ,\theta ) \left\langle  \nabla  f(x_\theta(t) ,U_\theta(t) ) , \zeta^{(c)}_k  \right\rangle dt  \right] \\
&+\sum_{k \in \mathcal{R}_c} \E\left[  \int_{0}^T \left\langle    \nabla \left[   \lambda_k (x_\theta(t) ,U_\theta(t) ,\theta ) \langle \nabla  f(x_\theta(t),U_\theta(t) ) , \zeta^{(c)}_k  \rangle \right] , y_\theta(t) \right\rangle dt  \right] \notag \\
& + \sum_{k \in \mathcal{R}_d} \E\left[  \int_{0}^T  \lambda_k (x_\theta(t) ,U_\theta(t) ,\theta ) \left\langle    \nabla    \left(  \Delta_k f(x_\theta(t),U_\theta(t)  )    \right)  , y_\theta(t) \right\rangle dt  \right] \notag \\
&+\sum_{k \in \mathcal{R}_d}  \E\left[  \int_{0}^T  \left\langle \nabla \lambda_k(x_\theta( t) ,U_\theta(t),\theta ) ,y_\theta(t)  \right\rangle   \Delta_k \Psi_{T- t}( x_\theta(t), U_\theta(t), \theta ) dt \right]. \notag
\end{align}

Let $\{ \mathcal{F}_t \}$ be the filtration generated by process $(x_\theta, U_\theta)$. For any $t \geq 0$ let $\E_t (\cdot )$ denote the conditional expectation $\E( \cdot \vert \mathcal{F}_t )$. Proposition \ref{prop:secondprop} allows us to write
\begin{align*}
&\nabla \Psi_{t - s}(x_\theta(s) , U_\theta(s), t-s ) \\
& = \nabla f(x_\theta(s) , U_\theta(s) ) \\
&+  \sum_{k \in \mathcal{R}_c}  \int_{s}^{t} \E_s \left. \Big[  \Phi^*(x_\theta(s), U_\theta(s),u-s)  \right. \\& \left. \qquad \qquad  \nabla \left[   \lambda_k (x_\theta(u) ,U_\theta(u) ,\theta ) \left \langle \nabla f(x_\theta(u),U_\theta(u) ) , \zeta^{(c)}_k \right\rangle  \right]  \right]  du   \notag  \\
&+ \sum_{k \in \mathcal{R}_d}  \int_{s}^{t} \E_s \left[  \lambda_k (x_\theta(u) ,U_\theta(u) ,\theta )  \Phi^*(x_\theta(s), U_\theta(s),u-s) \nabla \left( \Delta_k  f(x_\theta(u),U_\theta(u)  )    \right)  \right] du    \notag  \\
&  + \sum_{k \in \mathcal{R}_d} \int_{ s}^{ t } \E_s \left[   \Phi^*(x_\theta(s), U_\theta(s), u -s)  \right. \\& \left. \qquad \qquad  \nabla \lambda_k(x_\theta( u) ,U_\theta(u),\theta ) \Delta_k \Psi_{t-u}( x_\theta( u) , U_\theta( u ) ,\theta ) \right]  du  . 
\end{align*}
This shows that
\begin{align*}
& \frac{d}{dt} \nabla \Psi_{t - s}(x_\theta(s) , U_\theta(s), t-s )\\
  &=  \sum_{k \in \mathcal{R}_c}  \E_s \left[  \Phi^*(x_\theta(s), U_\theta(s),t-s)    \nabla \left[   \lambda_k (x_\theta(t) ,U_\theta(t) ,\theta ) \left \langle \nabla f(x_\theta(t),U_\theta(t) ) , \zeta^{(c)}_k \right\rangle  \right]  \right]   \notag  \\
&+ \sum_{k \in \mathcal{R}_d}  \E_s \left[  \lambda_k (x_\theta(t) ,U_\theta(t) ,\theta )  \Phi^*(x_\theta(s), U_\theta(s),t-s) \nabla \left( \Delta_k  f(x_\theta(t),U_\theta(t)  )    \right)  \right]     \notag  \\
&  + \sum_{k \in \mathcal{R}_d} \E_s \left[   \Phi^*(x_\theta(s), U_\theta(s), t -s) \nabla \lambda_k(x_\theta( t) ,U_\theta(t),\theta ) \Delta_k f( x_\theta( t) , U_\theta(t )  ) \right]  \\  
&  + \sum_{k \in \mathcal{R}_d} \int_{ s}^{ t } \E_s \left. \bigg[   \Phi^*(x_\theta(s), U_\theta(s), u -s)  \right. \\& \left. \qquad \qquad  \nabla \lambda_k(x_\theta( u) ,U_\theta(u),\theta ) \frac{d}{dt}  \Delta_k \Psi_{t-u}( x_\theta( u) , U_\theta( u ) ,\theta ) \right]  du.  
\end{align*}
The middle two terms can be combined using the product rule $\nabla (gh) = g \nabla h + h \nabla g$ to yield
\begin{align*}
& \frac{d}{dt} \nabla \Psi_{t - s}(x_\theta(s) , U_\theta(s), t-s )\\
  &=  \sum_{k \in \mathcal{R}_c}  \E_s \left[  \Phi^*(x_\theta(s), U_\theta(s),t-s)    \nabla \left[   \lambda_k (x_\theta(t) ,U_\theta(t) ,\theta ) \left \langle \nabla f(x_\theta(t),U_\theta(t) ) , \zeta^{(c)}_k \right\rangle  \right]  \right]   \notag  \\
&+ \sum_{k \in \mathcal{R}_d}  \E_s \left[  \Phi^*(x_\theta(s), U_\theta(s),t-s) \nabla \left(  \lambda_k (x_\theta(t) ,U_\theta(t) ,\theta )  \Delta_k  f(x_\theta(t),U_\theta(t)  )    \right)  \right]     \notag  \\
&  + \sum_{k \in \mathcal{R}_d} \int_{ s}^{ t } \E_s \left. \bigg[   \Phi^*(x_\theta(s), U_\theta(s), u -s)  \right. \\& \left. \qquad \qquad  \nabla \lambda_k(x_\theta( u) ,U_\theta(u),\theta ) \frac{d}{dt}  \Delta_k \Psi_{t-u}( x_\theta( u) , U_\theta( u ) ,\theta ) \right]  du.   \notag
\end{align*}
Using this we can compute the time-derivative of $L(t)$ as
\begin{align}
\label{LTimederformula}
\frac{d L(t) }{dt} = \sum_{k \in \mathcal{R}_c}  \E\left[  \partial_\theta \lambda_k  (x_\theta(t) ,U_\theta(t) ,\theta ) \left\langle  \nabla  f(x_\theta(t) ,U_\theta(t) ) , \zeta^{(c)}_k  \right\rangle \right] + A+ B+ C,
\end{align}
where   
\begin{align*}
&A  :=   \sum_{k \in \mathcal{R}_c} \sum_{j \in \mathcal{R}_c}  \int_{0}^{t}  \E\left[   \partial_\theta \lambda_k  (x_\theta(s) ,U_\theta(s) ,\theta )  \left. \Big\langle \Phi^*(x_\theta( s) , U_\theta( s) , t-s) \right. \right. \\& \left. \left. \qquad \qquad    \nabla \left[   \lambda_j (x_\theta(t) ,U_\theta(t) ,\theta ) \langle \nabla f(x_\theta(t),U_\theta(t) ) , \zeta^{(c)}_j  \rangle \right], \zeta^{(c)}_k  \right\rangle \right] ds, \\
&B:=  \sum_{k \in \mathcal{R}_c} \sum_{j \in \mathcal{R}_d}    \int_{0}^{t}   \E\left[    \partial_\theta \lambda_k  (x_\theta(s) ,U_\theta(s) ,\theta ) \right. \\ 
&  \left. \left\langle \Phi^*(x_\theta( s) , U_\theta( s) , t-s)      \nabla  \left[  \lambda_j (x_\theta(t) ,U_\theta(t) ,\theta )  \Delta_j  f(x_\theta(t),U_\theta(t)  )    \right] , \zeta^{(c)}_k  \right\rangle  \right] ds \\
\textnormal{and} \\
&C: =   \sum_{k \in \mathcal{R}_c} \sum_{j \in \mathcal{R}_d}  \int_{0}^{t}   \E\left[    \partial_\theta \lambda_k  (x_\theta(s) ,U_\theta(s) ,\theta )\left\langle  \int_{ s}^{ t }  \Phi^*(x_\theta(s), U_\theta(s), u -s)  \right. \right. \\& \left. \left. \qquad \qquad    \nabla \lambda_j(x_\theta( u) ,U_\theta(u),\theta ) \frac{d}{dt}  \Delta_j \Psi_{t-u}( x_\theta( u) , U_\theta( u ) ,\theta ) du, \zeta^{(c)}_k  \right\rangle  \right] ds. 
\end{align*} 
This definition of $A$, $B$ and $C$ ensures that
\begin{align*}
&A+ B+ C \\
& = \sum_{k \in \mathcal{R}_c} \int_{0}^{t}  \E\left[   \partial_\theta \lambda_k  (x_\theta(s) ,U_\theta(s) ,\theta )  \left\langle \frac{d}{dt} \nabla \Psi_{t - s}(x_\theta(s) , U_\theta(s), t-s ), \zeta^{(c)}_k  \right\rangle ds \right].
\end{align*}

Recall that $y_\theta(t)$ can be expressed as \eqref{defn_ythetat}. Therefore we can write $A$ as
\begin{align}
\label{Ainproperform}
A  &=  \sum_{j \in \mathcal{R}_c}  \E\left[  \left\langle  \nabla \left[   \lambda_j (x_\theta(t) ,U_\theta(t) ,\theta ) \langle \nabla f(x_\theta(t),U_\theta(t) ) , \zeta^{(c)}_j  \rangle \right] ,  \right.\right. \notag \\ 
& \quad \quad \left.\left.   \sum_{k \in \mathcal{R}_c}   \int_{0}^{t}  \partial_\theta \lambda_k  (x_\theta(s) ,U_\theta(s) ,\theta )  \Phi(x_\theta( s) , U_\theta( s) , t-s)   \zeta^{(c)}_k  \right\rangle ds \right]  \notag \\
& =  \sum_{j \in \mathcal{R}_c}  \E\left[  \left\langle  \nabla \left[   \lambda_j (x_\theta(t) ,U_\theta(t) ,\theta ) \langle \nabla f(x_\theta(t),U_\theta(t) ) , \zeta^{(c)}_j  \rangle \right] , y_\theta(t)   \right\rangle  \right]. 
\end{align}
Similarly we can write $B$ as
\begin{align}
\label{Binproperform}
 B =  \sum_{j \in \mathcal{R}_d}  \E\left[   \left\langle   \nabla \left[   \lambda_j (x_\theta(t) ,U_\theta(t) ,\theta )  \Delta_j f(x_\theta(t),U_\theta(t)  )    \right] , y_\theta(t) \right\rangle \right].
\end{align}

Changing the order of integration we can write $C$ as
\begin{align*}
&C=   \sum_{j \in \mathcal{R}_d}  \int_{0}^{t}   \E\left[     \left\langle  \nabla \lambda_j(x_\theta( u) ,U_\theta(u),\theta ) \frac{d}{dt}  \Delta_j \Psi_{t-u}( x_\theta( u) , U_\theta( u ) ,\theta ) ,  \right. \right. \\ 
&\quad \quad  \left. \left. \sum_{k \in \mathcal{R}_c}   \int_{ 0}^{ u} \partial_\theta \lambda_k  (x_\theta(s) ,U_\theta(s) ,\theta )    \Phi(x_\theta(s), U_\theta(s), u -s)\zeta^{(c)}_k   ds \right\rangle du  \right]   \\
& = \sum_{j \in \mathcal{R}_d}  \int_{0}^{t}   \E\left[     \left\langle  \nabla \lambda_j(x_\theta( u) ,U_\theta(u),\theta ) \frac{d}{dt}  \Delta_j \Psi_{t-u}( x_\theta( u) , U_\theta( u ) ,\theta ) , y_\theta(u)\right\rangle du \right]\\
& = \sum_{j \in \mathcal{R}_d}  \frac{d}{dt}    \int_{0}^{t}   \E\left[     \left\langle  \nabla \lambda_j(x_\theta( u) ,U_\theta(u),\theta )\Delta_j \Psi_{t-u}( x_\theta( u) , U_\theta( u ) ,\theta ) , y_\theta(u)\right\rangle du \right] \\
&  -  \sum_{j \in \mathcal{R}_d}   \E\left[     \left\langle  \nabla \lambda_j(x_\theta( t) ,U_\theta(t),\theta )\Delta_j f ( x_\theta( t) , U_\theta( t ) ) , y_\theta(t)\right\rangle  \right] .
\end{align*}
This relation along with \eqref{Ainproperform}, \eqref{Binproperform} and \eqref{LTimederformula} implies that  
\begin{align*}
\frac{d L(t) }{dt} &= \sum_{k \in \mathcal{R}_c} \E\left[    \partial_\theta \lambda_k (x_\theta(t) ,U_\theta(t) ,\theta ) \left\langle  \nabla f(x_\theta(t) ,U_\theta(t) ) , \zeta^{(c)}_k  \right\rangle \right]  \\
& +\sum_{k \in \mathcal{R}_c}  \E\left[   \left\langle   \nabla \left[   \lambda_k (x_\theta(t) ,U_\theta(t) ,\theta ) \langle \nabla f(x_\theta(t),U_\theta(t) ) , \zeta^{(c)}_k  \rangle \right] , y_\theta(t) \right\rangle  \right] \\
&+  \sum_{k \in \mathcal{R}_d}\E\left[    \left\langle   \nabla \left[   \lambda_k (x_\theta(t) ,U_\theta(t) ,\theta )  \Delta_k f(x_\theta(t),U_\theta(t)  )     \right] , y_\theta(t) \right\rangle  \right]\\
&+ \sum_{k \in \mathcal{R}_d}  \frac{d}{dt}  \int_{0}^t  \E\left[  \left\langle \nabla \lambda_k(x_\theta( s) ,U_\theta(s),\theta )    \Delta_k \Psi_{t-s}( x_\theta( s) , U_\theta( s ) ,\theta ) ,y_\theta(s)  \right\rangle ds \right] \\
& -  \sum_{k \in \mathcal{R}_d}   \E\left[     \left\langle  \nabla \lambda_k(x_\theta( t) ,U_\theta(t),\theta )\Delta_k f ( x_\theta( t) , U_\theta( t ) ) , y_\theta(t)\right\rangle  \right].
\end{align*}
Applying the product rule on the third term, will produce two terms, one of which will cancel with the last term to yield
\begin{align*}
\frac{d L(t) }{dt} &= \sum_{k \in \mathcal{R}_c} \E\left[    \partial_\theta \lambda_k (x_\theta(t) ,U_\theta(t) ,\theta ) \left\langle  \nabla f(x_\theta(t) ,U_\theta(t) ) , \zeta^{(c)}_k  \right\rangle \right]  \\
& +\sum_{k \in \mathcal{R}_c}  \E\left[   \left\langle   \nabla \left[   \lambda_k (x_\theta(t) ,U_\theta(t) ,\theta ) \langle \nabla f(x_\theta(t),U_\theta(t) ) , \zeta^{(c)}_k  \rangle \right] , y_\theta(t) \right\rangle  \right] \\
&+  \sum_{k \in \mathcal{R}_d}\E\left[   \lambda_k (x_\theta(t) ,U_\theta(t) ,\theta )   \left\langle   \nabla \left(   \Delta_k f(x_\theta(t),U_\theta(t)  )     \right) , y_\theta(t) \right\rangle  \right]\\
&+ \sum_{k \in \mathcal{R}_d}  \frac{d}{dt}  \int_{0}^t  \E\left[  \left\langle \nabla \lambda_k(x_\theta( s) ,U_\theta(s),\theta )    \Delta_k \Psi_{t-s}( x_\theta( s) , U_\theta( s ) ,\theta ) ,y_\theta(s)  \right\rangle ds \right] .
\end{align*}
Integrating this equation from $t = 0$ to $t =T$ will prove \eqref{cond_suff_mainthem} and this completes the proof of Theorem \ref{thm:convergence}.
\end{proof}

\begin{proof}[Proof of Theorem \ref{thm:representation}]
Consider the Markov process $( x_\theta(t) , U_\theta(t) , y_\theta(t)  )_{t \geq 0}$. The generator of this process is given by
\begin{align*}
\mathbb{H} F(x,u,y) &= \sum_{k \in  \mathcal{R}_c} \lambda_k(x,u,\theta) \left\langle \nabla F(x,u,y), \zeta^{(c)}_k  \right\rangle +    \sum_{k \in  \mathcal{R}_d} \lambda_k(x,u,\theta) \Delta_k F(x,u,y) \\
&+ \sum_{k \in  \mathcal{R}_c} \partial_\theta \lambda_k(x,u,\theta) \left\langle \nabla_y F(x,u,y), \zeta^{(c)}_k  \right\rangle \\
&+ \sum_{k \in  \mathcal{R}_c}  \left\langle \nabla  \lambda_k(x,u,\theta) , y \right\rangle \left\langle \nabla_y F(x,u,y), \zeta^{(c)}_k  \right\rangle
\end{align*}
for any real-valued function $F: \R^{S_c} \times \N^{S_d}_0 \times \R^{S_c} \to \R$. Here $\nabla_y F$ denotes the gradient of function $F$ w.r.t.\ the last $S_c$ coordinates. Setting 
\begin{align*}
F(x,u,y) = \left\langle  \nabla f(x,u), y  \right\rangle
\end{align*}
we obtain
\begin{align*}
\mathbb{H} F(x,u,y) &= \sum_{k \in  \mathcal{R}_c} \lambda_k(x,u,\theta) \left\langle \Delta f(x,u) y, \zeta^{(c)}_k  \right\rangle +    \sum_{k \in  \mathcal{R}_d} \lambda_k(x,u,\theta) \Delta_k  \left\langle  \nabla f(x,u), y  \right\rangle \\
&+ \sum_{k \in  \mathcal{R}_c} \partial_\theta \lambda_k(x,u,\theta) \left\langle \nabla f(x,u), \zeta^{(c)}_k  \right\rangle \\
& + \sum_{k \in  \mathcal{R}_c}  \left\langle \nabla  \lambda_k(x,u,\theta) , y \right\rangle \left\langle \nabla f(x,u), \zeta^{(c)}_k  \right\rangle
\end{align*}
where $\Delta F$ denotes the Hessian matrix of $F$ w.r.t.\ the first $S_c$ coordinates. However note that the first and the fourth terms can be combined with product-rule as
\begin{align*}
&\sum_{k \in  \mathcal{R}_c} \lambda_k(x,u,\theta) \left\langle \Delta f(x,u) y, \zeta^{(c)}_k  \right\rangle  +\sum_{k \in  \mathcal{R}_c}  \left\langle \nabla  \lambda_k(x,u,\theta) , y \right\rangle \left\langle \nabla f(x,u), \zeta^{(c)}_k  \right\rangle \\
& = \sum_{k \in  \mathcal{R}_c} \left\langle    \nabla \left[   \lambda_k (x, u ,\theta ) \langle \nabla  f(x,u ) , \zeta^{(c)}_k  \rangle \right] , y \right\rangle
\end{align*}
and hence we get
\begin{align}
\label{genH_simpl}
\mathbb{H} F(x,u,y) & =  \sum_{k \in  \mathcal{R}_c} \partial_\theta \lambda_k(x,u,\theta) \left\langle \nabla f(x,u), \zeta^{(c)}_k  \right\rangle \\ 
&+  \sum_{k \in  \mathcal{R}_c} \left\langle    \nabla \left[   \lambda_k (x, u ,\theta ) \langle \nabla  f(x,u ) , \zeta^{(c)}_k  \rangle \right] , y \right\rangle \notag \\
&+ \sum_{k \in  \mathcal{R}_d} \lambda_k(x,u,\theta) \Delta_k  \left\langle  \nabla f(x,u), y  \right\rangle.  \notag
\end{align}
Using Dynkin's formula we have
\begin{align*}
\E\left( F(x_\theta(T) , U_\theta(T) , y_\theta(T)  ) \right)= \E \left[  \int_0^T  \mathbb{H} F(x_\theta(t) , U_\theta(t) , y_\theta(t))dt \right]
\end{align*}
and substituting \eqref{genH_simpl} yields
\begin{align*}
& \E\left[  \left \langle \nabla f (x_\theta(T) ,U_\theta(T) ) , y_\theta(T)   \right\rangle \right]\\
& = \sum_{k \in \mathcal{R}_c} \E\left[   \int_{0}^T   \partial_\theta \lambda_k  (x_\theta(t) ,U_\theta(t) ,\theta ) \left\langle  \nabla  f(x_\theta(t) ,U_\theta(t) ) , \zeta^{(c)}_k  \right\rangle dt  \right] \\
&+\sum_{k \in \mathcal{R}_c} \E\left[  \int_{0}^T \left\langle    \nabla \left[   \lambda_k (x_\theta(t) ,U_\theta(t) ,\theta ) \langle \nabla  f(x_\theta(t),U_\theta(t) ) , \zeta^{(c)}_k  \rangle \right] , y_\theta(t) \right\rangle dt  \right] \\
& + \sum_{k \in \mathcal{R}_d} \E\left[  \int_{0}^T  \lambda_k (x_\theta(s) ,U_\theta(s) ,\theta ) \left\langle    \nabla    \left(  \Delta_k f(x_\theta(t),U_\theta(t)  )    \right)  , y_\theta(t) \right\rangle dt  \right].\\
\end{align*}
This relation along with Proposition \ref{prop:mainprop} proves Theorem \ref{thm:representation}.
\end{proof}

\section*{Acknowledgments}
This work was funded by the European Research Council (ERC) under the European Union's Horizon 2020 research and innovation programme (grant agreement 743269).

\bibliographystyle{unsrt}

\begin{thebibliography}{10}

\bibitem{Noise}
H.~H. McAdams and A.~Arkin.
\newblock {It's a noisy business! Genetic regulation at the nanomolar scale.}
\newblock {\em Trends in genetics : TIG}, 15(2):65--69, February 1999.

\bibitem{Elowitz}
Michael~B. Elowitz, Arnold~J. Levine, Eric~D. Siggia, and Peter~S. Swain.
\newblock Stochastic gene expression in a single cell.
\newblock {\em Science}, 297(5584):1183--1186, 2002.

\bibitem{Goutsias}
John Goutsias.
\newblock Classical versus stochastic kinetics modeling of biochemical reaction
  systems.
\newblock {\em Biophysical Journal}, 92(7):2350 -- 2365, 2007.

\bibitem{Rao}
Adam~P. Arkin, Christopher~V. Rao, and Denise~M. Wolf.
\newblock {Control, exploitation and tolerance of intracellular noise}.
\newblock {\em Nature}, 420:231--237, 2002.

\bibitem{McAdams}
H.~H. McAdams and A.~Arkin.
\newblock {It's a noisy business! Genetic regulation at the nanomolar scale.}
\newblock {\em Trends in genetics : TIG}, 15(2):65--69, February 1999.

\bibitem{Feng}
Xiao-jiang Feng, Sara Hooshangi, David Chen, Ron Li, Genyuan;~Weiss, and
  Herschel Rabitz.
\newblock Optimizing genetic circuits by global sensitivity analysis.
\newblock {\em Biophysical journal}, 87(4):2195 -- 2202, 2004.

\bibitem{Stelling}
Jorg Stelling, Ernst~Dieter Gilles, and Francis~J. Doyle.
\newblock Robustness properties of circadian clock architectures.
\newblock {\em Proceedings of the National Academy of Sciences of the United
  States of America}, 101(36):13210--13215, 2004.

\bibitem{Fink2009}
M.~Fink and D.~Noble.
\newblock Markov models for ion channels: Versatility versus identifiability
  and speed.
\newblock {\em Philosophical Transactions of the Royal Society A: Mathematical,
  Physical and Engineering Sciences}, 367(1896):2161--2179, 2009.

\bibitem{IRN}
R.~Gunawan, Y.~Cao, and F.J. Doyle.
\newblock Sensitivity analysis of discrete stochastic systems.
\newblock {\em Biophysical Journal}, 88(4):2530--2540, 2005.

\bibitem{Gir}
S.~Plyasunov and A.P. Arkin.
\newblock Efficient stochastic sensitivity analysis of discrete event systems.
\newblock {\em Journal of Computational Physics}, 221:724--738, 2007.

\bibitem{KSR1}
M.~Rathinam, P.~W. Sheppard, and M.~Khammash.
\newblock Efficient computation of parameter sensitivities of discrete
  stochastic chemical reaction networks.
\newblock {\em Journal of Chemical Physics}, 132, 2010.

\bibitem{KSR2}
P.~W. Sheppard, M.~Rathinam, and M.~Khammash.
\newblock A pathwise derivative approach to the computation of parameter
  sensitivities in discrete stochastic chemical systems.
\newblock {\em Journal of Chemical Physics}, 136, 2012.

\bibitem{DA}
D.~Anderson.
\newblock An efficient finite difference method for parameter sensitivities of
  continuous time markov chains.
\newblock {\em SIAM: Journal on Numerical Analysis}, 2012.

\bibitem{Gupta}
A.~Gupta and M.~Khammash.
\newblock Unbiased estimation of parameter sensitivities for stochastic
  chemical reaction networks.
\newblock {\em SIAM : Journal on Scientific Computing}, 35(6):2598 -- 2620,
  2013.

\bibitem{Gupta2}
Ankit Gupta and Mustafa Khammash.
\newblock An efficient and unbiased method for sensitivity analysis of
  stochastic reaction networks.
\newblock {\em Journal of The Royal Society Interface}, 11(101):20140979, 2014.

\bibitem{GP}
Daniel~T. Gillespie.
\newblock Exact stochastic simulation of coupled chemical reactions.
\newblock {\em The Journal of Physical Chemistry}, 81(25):2340--2361, 1977.

\bibitem{NR}
Michael~A. Gibson and Jehoshua Bruck.
\newblock Efficient exact stochastic simulation of chemical systems with many
  species and many channels.
\newblock {\em The Journal of Physical Chemistry A}, 104(9):1876--1889, 2000.

\bibitem{gupta2017estimation}
Ankit Gupta, Muruhan Rathinam, and Mustafa Khammash.
\newblock Estimation of parameter sensitivities for stochastic reaction
  networks using tau-leap simulations.
\newblock {\em arXiv preprint arXiv:1703.00947}, 2017.

\bibitem{tleap1}
Daniel~T. Gillespie.
\newblock Approximate accelerated stochastic simulation of chemically reacting
  systems.
\newblock {\em The Journal of Chemical Physics}, 115(4):1716--1733, 2001.

\bibitem{tleap2}
Yang Cao, Daniel~T. Gillespie, and Linda~R. Petzold.
\newblock Efficient step size selection for the tau-leaping simulation method.
\newblock {\em The Journal of Chemical Physics}, 124(4), 2006.

\bibitem{Rathinam2003}
Muruhan Rathinam, Linda~R. Petzold, Yang Cao, and Daniel~T. Gillespie.
\newblock Stiffness in stochastic chemically reacting systems: The implicit
  tau-leaping method.
\newblock {\em The Journal of Chemical Physics}, 119(24):12784--12794, 2003.

\bibitem{cao2004numerical}
Yang Cao, Linda~R Petzold, Muruhan Rathinam, and Daniel~T Gillespie.
\newblock The numerical stability of leaping methods for stochastic simulation
  of chemically reacting systems.
\newblock {\em The Journal of chemical physics}, 121(24):12169--12178, 2004.

\bibitem{Ball}
Karen Ball, Thomas~G. Kurtz, Lea Popovic, and Greg Rempala.
\newblock Asymptotic analysis of multiscale approximations to reaction
  networks.
\newblock {\em Ann. Appl. Probab.}, 16(4):1925--1961, 2006.

\bibitem{HWKang}
Hye-Won Kang and Thomas~G. Kurtz.
\newblock Separation of time-scales and model reduction for stochastic reaction
  networks.
\newblock {\em Ann. Appl. Probab.}, 23(2):529--583, 2013.

\bibitem{crudu2009hybrid}
Alina Crudu, Arnaud Debussche, and Ovidiu Radulescu.
\newblock Hybrid stochastic simplifications for multiscale gene networks.
\newblock {\em BMC systems biology}, 3(1):89, 2009.

\bibitem{ssSSA}
Y.~Cao, D.T. Gillespie, and L.R. Petzold.
\newblock The slow-scale stochastic simulation algorithm.
\newblock {\em Journal of Chemical Physics}, 122(1):1--18, 2005.

\bibitem{weinan1}
E.~Weinan, D.~Liu, and E.~Vanden-Eijnden.
\newblock Nested stochastic simulation algorithm for chemical kinetic systems
  with disparate rates.
\newblock {\em Journal of Chemical Physics}, 123(19):1--8, 2005.

\bibitem{weinan2}
Weinan E, Di~Liu, and Eric Vanden-Eijnden.
\newblock Nested stochastic simulation algorithms for chemical kinetic systems
  with multiple time scales.
\newblock {\em J. Comput. Phys.}, 221(1):158--180, January 2007.

\bibitem{davis1993markov}
MHA Davis.
\newblock {\em Markov models and optimization, vol. 49 of Monographs on
  Statistics and Applied Probability}.
\newblock Chapman \& Hall, London, 1993.

\bibitem{rudnicki2017piecewise}
Ryszard Rudnicki and Marta Tyran-Kami{\'n}ska.
\newblock {\em Piecewise Deterministic Processes in Biological Models}.
\newblock Springer, 2017.

\bibitem{EK}
S.~N. Ethier and T.~G. Kurtz.
\newblock {\em Markov processes}.
\newblock Wiley Series in Probability and Mathematical Statistics: Probability
  and Mathematical Statistics. John Wiley \& Sons Inc., New York, 1986.
\newblock Characterization and convergence.

\bibitem{gupta2014sensitivity}
Ankit Gupta, Muruhan Rathinam, and Mustafa Khammash.
\newblock Estimation of parameter sensitivities for stochastic reaction
  networks using tau-leap simulations.
\newblock {\em SIAM Journal on Numerical Analysis}, 56(2):1134--1167, 2018.

\bibitem{anderson2007modified}
David~F Anderson.
\newblock A modified next reaction method for simulating chemical systems with
  time dependent propensities and delays.
\newblock {\em The Journal of chemical physics}, 127(21):214107, 2007.

\bibitem{DASurvey}
D.A. Anderson and T.G. Kurtz.
\newblock Continuous time {M}arkov chain models for chemical reaction networks.
\newblock In H.~Koeppl, G.~Setti, M.~di~Bernardo, and D.~Densmore, editors,
  {\em Design and Analysis of Biomolecular Circuits}. Springer-Verlag, 2011.

\bibitem{hepp2015adaptive}
Benjamin Hepp, Ankit Gupta, and Mustafa Khammash.
\newblock Adaptive hybrid simulations for multiscale stochastic reaction
  networks.
\newblock {\em The Journal of chemical physics}, 142(3):034118, 2015.

\bibitem{duncan2016hybrid}
Andrew Duncan, Radek Erban, and Konstantinos Zygalakis.
\newblock Hybrid framework for the simulation of stochastic chemical kinetics.
\newblock {\em Journal of Computational Physics}, 326:398--419, 2016.

\bibitem{MO}
Mukund Thattai and Alexander van Oudenaarden.
\newblock Intrinsic noise in gene regulatory networks.
\newblock {\em Proceedings of the National Academy of Sciences},
  98(15):8614--8619, 2001.

\bibitem{michaelis2007kinetik}
Leonor Michaelis and Maud~Leonora Menten.
\newblock {\em Die kinetik der invertinwirkung}.
\newblock Universit{\"a}tsbibliothek Johann Christian Senckenberg, 2007.

\bibitem{darden1979pseudo}
Thomas Darden.
\newblock A pseudo-steady state approximation for stochastic chemical kinetics.
\newblock {\em The Rocky Mountain Journal of Mathematics}, 9(1):51--71, 1979.

\bibitem{kurtz1978strong}
Thomas~G. Kurtz.
\newblock {Strong approximation theorems for density dependent Markov chains},
  1978.

\bibitem{Eymard2011}
Robert Eymard, Sophie Mercier, and Michel Roussignol.
\newblock Importance and sensitivity analysis in dynamic reliability.
\newblock {\em Methodology and Computing in Applied Probability},
  13(1):75--104, Mar 2011.

\bibitem{ganguly2015jump}
Arnab Ganguly, Derya Altintan, and Heinz Koeppl.
\newblock Jump-diffusion approximation of stochastic reaction dynamics: error
  bounds and algorithms.
\newblock {\em Multiscale Modeling \& Simulation}, 13(4):1390--1419, 2015.

\end{thebibliography}

\end{document}